\newcommand{\G}{\Gamma}
\renewcommand\mod{\operatorname{mod}}
\newcommand{\ZZ}{\mathbb{Z}}
\newcommand{\cB}{\mathcal B}
\newcommand{\cL}{\mathcal L}
\title{New results on path-decompositions\\ and their down-links}
\author{
\begin{otherlanguage*}{italian}
A. Benini, L. Giuzzi and A. Pasotti\thanks{
anna.benini@ing.unibs.it, luca.giuzzi@ing.unibs.it,
anita.pasotti@ing.unibs.it,
Dipartimento di Ma\-te\-ma\-ti\-ca,
Facolt\`a di Ingegneria,
Universit\`a degli Studi di Brescia,
Via Valotti 9,
I-25133 Brescia (IT).
\vskip.2pt
The present research was performed within the activity of GNSAGA
of the Italian INDAM with the financial support of the Italian
Ministry MIUR, projects ``Strutture di incidenza e combinatorie''
 and ``Disegni combinatorici, grafi e loro applicazioni''.
}\end{otherlanguage*}}
\date{}
\newtheorem{defi}{Definition}[section]
\newtheorem{prop}[defi]{Proposition}
\newtheorem{lem}[defi]{Lemma}
\newtheorem{rem}[defi]{Remark}
\newtheorem{thm}[defi]{Theorem}
\newtheorem{cor}[defi]{Corollary}
\begin{document}
\selectlanguage{english}
\maketitle
\selectlanguage{english}
\begin{abstract}
In \cite{BGP} the concept of \emph{down-link} from a
$(K_v,\Gamma)$-design $\cB$ to a $(K_n,\Gamma')$-design $\cB'$ has been
introduced. In the present paper  the spectrum problems for
$\Gamma'=P_4$ are studied. General results on the existence of path-decompositions
and embeddings between path-decompositions playing a fundamental role for the construction of down-links
are also presented.
\end{abstract}

\noindent {\bf Keywords:} $(K_v,\Gamma)$-design; down-link;  embedding.
\par\noindent {\bf MSC(2010):}  05C51, 05B30, 05C38.

\section{Introduction}
Suppose $\G\leq K$ to be a subgraph of $K$. A $(K,\Gamma)$-\emph{design}, or \emph{$\Gamma$-decomposition} of $K$,
is a set of graphs isomorphic to $\G$ whose edges partition the edge set of $K$.
Given a graph $\Gamma$, the problem of determining the existence
of  $(K_v,\Gamma)$-designs, also called \emph{$\Gamma$-designs of
order $v$}, where $K_v$ is the complete graph on $v$
vertices,
has been extensively studied;
see the surveys \cite{BO,BZ}.
In \cite{BGP} we proposed the following definition.
\begin{defi}
\label{d1}
  Given a $(K,\Gamma)$-design $\cB$ and a $(K',\Gamma')$-design $\cB'$
  with $\Gamma'\leq \Gamma$, a \emph{down-link} from $\cB$ to $\cB'$
  is a function $f: \cB \rightarrow \cB'$
  such that $f(B)\leq B$, for any $B \in \cB$.
\end{defi}
When such a  function $f$  exists, we say that it is possible to
\emph{down-link} $\cB$ to $\cB'$.
\par

As seen in \cite{BGP}, down-links are closely related to metamorphoses
\cite{LR1}, their
generalizations \cite{LMQ} and embeddings \cite{Q2002a}.
In close analogy to embeddings, we
introduced spectrum problems about down-links:

\begin{enumerate}[(I)]
\item  For each admissible $v$, determine the set
  ${\cL}_1\G(v)$ of all integers $n$ such that
  there exists \emph{some} $\G$-design of order $v$ down-linked
  to a $\G'$-design of order $n$.
\item For each admissible $v$, determine the set ${\cL}_2\G(v)$ of all
  integers $n$ such that
  \emph{every} $\G$-design of order $v$ can be down-linked to a $\G'$-design
  of order $n$.
\end{enumerate}

\noindent
In \cite[Proposition 3.2]{BGP},
we proved that
  for any $v$ such that there exists a
  $(K_v,\G)$-design  and any $\G'\leq\G$,
  the sets $\cL_1\G(v)$ and $\cL_2\G(v)$ are always non-empty.
In the same paper the case $\G'=P_3$ has been investigated in detail.\\
Here
we shall deal with the case $\G'=P_4$. In order to get results about down-links to $P_4$-designs,
we shall first study
path-designs and their embeddings. More precisely, in Section \ref{existence} we determine
sufficient conditions for the existence of $P_4$-decompositions of any graph $\G$
and $P_k$-decompositions of complete bipartite graphs. In Section \ref{embedding}, applying the
results of Section \ref{existence}, we are able to
prove the existence of embeddings and down-links between path-designs.
Section \ref{direct} is devoted to the cases of cycle systems and path-designs,
with general theorems and directed constructions.
\par
Throughout this paper the following standard notations will be used;
see also \cite{Ha}.
For any graph $\Gamma$, write $V(\Gamma)$ for the set of its vertices and
$E(\Gamma)$ for the set of its edges. If $\cB$ is a collection
of graphs, by $V(\cB)$ we will mean the set of the vertices of all its elements.
By $t\Gamma$ we shall denote the disjoint union of $t$ copies of graphs
all isomorphic to $\Gamma$.
As usual, $P_k=[a_1,\ldots,a_k]$ is the path with $k-1$ edges and
$C_k=(a_1,\ldots,a_k)$, $k\geq3$, is the cycle of length $k$.
Also, $K_{m,n}$ is the complete bipartite graph with parts of size $m$ and $n$.
When we focus on the actual parts
$X$ and $Y$, $K_{X,Y}$ will be written.

\section{Existence of some path-designs}\label{existence}
\label{sec:gen}
In this section we present new results on the existence of path decompositions.
Recall that a $(K_n,P_k)$-design
exists if, and only if, $n(n-1)\equiv 0\pmod{2(k-1)}$; see \cite{T}.

\begin{prop}\label{bip}
Let $k$ be an even integer. For $x=k-2,k$
the complete bipartite graph $K_{k-1,x}$ admits a $P_k$-decomposition.
\end{prop}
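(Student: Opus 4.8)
The plan is to prove the two cases separately by explicit $1$-rotational (difference method) constructions over a cyclic group, using the classical ``zig-zag'' graceful labelling of a path as the base block. Write $k=2t$ throughout.

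For $x=k-2$ I would realize $K_{k-1,k-2}$ on the parts $X=\ZZ_{k-2}\cup\{\infty\}$ and $Y=\ZZ_{k-2}$, with $\ZZ_{k-2}$ acting diagonally and fixing $\infty$. Under this action the edge set splits into exactly $k-1$ orbits: the \emph{infinity orbit} $\{\{\infty,j\}:j\in\ZZ_{k-2}\}$ and, for each $\delta\in\ZZ_{k-2}$, the \emph{difference orbit} formed by the edges $\{x,y\}$ with $x\in X\setminus\{\infty\}$, $y\in Y$ and $x-y=\delta$. Since $K_{k-1,k-2}$ has $(k-1)(k-2)$ edges and each orbit has size $k-2$, it suffices to exhibit one base path $B\cong P_k$ whose $k-1$ edges lie in pairwise distinct orbits; then $\{B+i:i\in\ZZ_{k-2}\}$ is the desired decomposition. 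I would take $B=[\infty,v_1,v_2,\dots,v_{k-1}]$, where $[v_1,\dots,v_{k-1}]\cong P_{k-1}$ carries the standard graceful labelling $v_1\mapsto 0,\ v_2\mapsto k-2,\ v_3\mapsto 1,\ v_4\mapsto k-3,\dots$, placed so that the vertices in odd position (labels $0,1,\dots,t-1$) lie in $Y$ and those in even position (labels $t,t+1,\dots,k-2$) lie in $X\setminus\{\infty\}$; this placement is forced by $\infty\in X$ and is legitimate since both label sets are in range. One then checks: (i) $B$ is a genuine $P_k$ (the labelling is injective); (ii) its unique edge at $\infty$ represents the infinity orbit; (iii) its remaining $k-2$ edges, read with the convention ``($X$-label)$-$($Y$-label) $\bmod (k-2)$'', realize each difference exactly once, because the successive graceful edge-labels are $k-2,k-3,\dots,1$, hence form a complete residue system modulo $k-2$, and the alternating-sign pattern of the zig-zag labelling makes every signed difference equal to its graceful edge-label.

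For $x=k$ the same strategy applies, but $K_{k-1,k}$ has $k(k-1)$ edges, so one would need $k$ orbits of length $k-1$ while a single $P_k$ has only $k-1$ edges; hence no purely $1$-rotational construction with one base block can work, and under \emph{any} transitive action on a part the ``leftover'' is a matching or a star, which contains no long path. I would therefore either enlarge the construction by a bounded number of fixed points (so as to produce short orbits carrying extra base paths), or — more directly — start from the decomposition of $K_{k-1,k-2}$ above, adjoin two vertices $y',y''$ to $Y$, and reroute a few of the existing paths through the $2(k-1)$ new edges at $y'$ and $y''$ so that two further copies of $P_k$ absorb exactly these edges together with the edges freed by the rerouting. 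The bookkeeping here — keeping every piece a genuine $P_k$, covering each new edge exactly once, and respecting the bipartition despite $k-1$ being odd — is what I expect to be the main obstacle; the case $x=k-2$, by contrast, is essentially the verification of a graceful labelling.
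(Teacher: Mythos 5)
Your construction for $x=k-2$ is sound and genuinely different from the paper's: the paper writes down, for both values of $x$ at once, two explicit $\frac{x}{2}\times k$ matrices whose rows are the $x$ paths of the decomposition, whereas you obtain the $x=k-2$ case from a single base block under the regular action of $\ZZ_{k-2}$, using the zig-zag graceful labelling of $P_{k-1}$ to certify that the $k-1$ edges of the base path meet the $k-1$ edge orbits (one infinity orbit, $k-2$ difference orbits) exactly once. This checks out; the one imprecision is the claim that ``both label sets are in range'': the even-position label $k-2$ reduces to $0$ in $\ZZ_{k-2}$, so $v_2$ and $v_1$ carry the same residue. This is harmless only because they sit in opposite parts, and because the $X$-side residues $\{0\}\cup\{k/2,\dots,k-3\}$ are still pairwise distinct — a point worth stating explicitly.

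The case $x=k$, however, is not proved. You correctly diagnose why a one-block $1$-rotational construction cannot work, but what you offer instead is a plan (``adjoin two vertices $y',y''$ and reroute a few of the existing paths''), together with an acknowledgement that the bookkeeping is ``the main obstacle.'' That bookkeeping is precisely the content of the statement for $x=k$, and it is not routine: the $2(k-1)$ new edges form a $K_{k-1,2}$, whose longest path has only $4$ edges, so for $k\geq 6$ each of the two additional copies of $P_k$ must borrow at least $k-5$ edges from the old paths, and every donor path must be re-certified as a genuine $P_k$ after the surgery. None of this is carried out, and no candidate rerouting is exhibited even for a single small value of $k$. The paper sidesteps the issue entirely by giving one uniform explicit construction covering both $x=k-2$ and $x=k$. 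As it stands, half of the proposition is unproven.
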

\begin{proof}
Consider the bipartite graph $K_{A,I}$ where  $A=\{a_1,\dots,a_{k-1}\}$ and $I=\{1,\dots,x\}$ with $x=k-2,k$.\\
Let $U^t=(1,\dots,1)$ be an $\frac{x}{2}$-tuple. Set $P_1^t=(1,\ldots, \frac{x}{2})$ and for $i=1,\ldots, \frac{x}{2}$,
$P_i^t=(i,i+1,\dots,\frac{x}{2},1,2,\dots,i-1)$, $\overline{P}_i^t=P_i^t+\frac{x}{2}U$, $A_i=a_iU$,
$\overline{A}_i=a_{i+\frac{k}{2}}U$.\\
If $k\equiv 0  \pmod 4$, consider the $\frac{x}{2}\times k$ matrices
\begin{align*}
M & =(P_1~A_1~\overline P_1~A_2~P_2~A_3~\overline
P_2\dots P_i~A_{2i-1}~\overline P_i~A_{2i}\dots P_{\frac{k}{4}}~A_{\frac{k-2}{2}}~\overline P_{\frac{k}{4}}~A_{\frac{k}{2}})\\
\overline M & =(\overline P_1~\overline A_1~P_1~\overline A_2~\overline P_2~\overline A_3~P_2\dots
\overline P_i~\overline A_{2i-1}~P_i~\overline A_{2i}\dots \overline P_{\frac{k}{4}}~\overline A_{\frac{k-2}{2}}~P_{\frac{k}{4}}~A_{\frac{k}{2}}).
\end{align*}
If $k\equiv 2  \pmod 4$, consider the $\frac{x}{2}\times k$ matrices
\begin{align*}
M & =(P_1~A_1~\overline P_1~A_2~P_2~A_3~\overline
P_2\dots P_i~A_{2i-1}~\overline P_i~A_{2i}\dots P_{\frac{k+2}{4}}~A_{\frac{k}{2}})\\
\overline M & =(\overline P_1~\overline A_1~P_1~\overline A_2~\overline P_2~\overline A_3~P_2\dots
\overline P_i~\overline A_{2i-1}~P_i~\overline A_{2i}\dots \overline P_{\frac{k+2}{4}}~A_{\frac{k}{2}}).
\end{align*}
In either case, the rows of $M$ and $\overline M$, taken together, are the $x$ paths of a $P_k$-decomposition of $K_{A,I}$.
\end{proof}

\begin{thm}\label{every}
Let $\Gamma$ be a graph with at least two vertices of degree $|V(\G)|-1$. Then  $\Gamma$ admits a  $P_4$-decomposition
 if, and only if, $|E(\Gamma)|\equiv 0\pmod 3$. If $|E(\Gamma)|\equiv 1,2\pmod 3$, then $\Gamma$ can be partitioned into
 a $P_4$-decomposition together with one or two (possibly connected) edges, respectively.
\end{thm}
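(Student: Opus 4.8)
Necessity is clear since $|E(P_4)|=3$. As removing copies of $P_4$ does not change the number of remaining edges modulo~$3$, the whole statement follows once we show that \emph{$\G$ decomposes into edge-disjoint copies of $P_4$ together with a set of at most two edges}: the leftover then has size $\le2$ and is $\equiv|E(\G)|\pmod3$, hence equal to $0$, $1$ or $2$ according to the residue of $|E(\G)|$, as required (and when the residue is $2$ the two surviving edges may or may not share a vertex, which is what the statement allows). We may clearly assume $|V(\G)|\ge4$.

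Fix two vertices $u,v$ of degree $|V(\G)|-1$, put $W=V(\G)\setminus\{u,v\}$, $m=|W|\ge2$ and $\G'=\G[W]$; then $uv\in E(\G)$, every $w\in W$ is joined to both $u$ and $v$, and $|E(\G)|=|E(\G')|+2m+1$. We rely on three building blocks.
\begin{enumerate}[(a)]
\item \emph{($K_{3,2}$-block.)} By Proposition~\ref{bip} (with $k=4$, $x=2$) the graph $K_{3,2}$ has a $P_4$-decomposition; explicitly, for $a,b,c\in W$ the six edges joining $\{a,b,c\}$ to $\{u,v\}$ split into the paths $[a,u,b,v]$ and $[u,c,v,a]$. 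Hence the edges joining $\{u,v\}$ to any $3t$ vertices of $W$ form $2t$ copies of $P_4$.
\item \emph{(Spine block.)} For distinct $x,y\in W$ the path $[x,u,v,y]$ removes $uv$, $ux$ and $vy$.
\item \emph{(Absorbing $\G'$.)} An edge $xy$ of $\G'$ is completed to a copy of $P_4$ either as $[u,x,y,v]$, consuming $ux$ and $vy$, or --- crucially --- as $[x',u,x,y]$ or $[x,y,v,y']$, consuming \emph{two} edges at one of $u,v$; a $P_3$ $[x,y,z]$ of $\G'$ is completed as $[u,x,y,z]$ or $[x,y,z,v]$; a $P_4$ of $\G'$ is kept unchanged.
\end{enumerate}

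The construction has two stages. In the first, one decomposes $\G'$ into paths of lengths $1$, $2$ and $3$ and completes each piece to a copy of $P_4$ by means of~(c); here the freedom in the path-decomposition of $\G'$ and in the completions must be used so that no edge of $\G$ incident to $u$ or to $v$ is taken twice, and so that the edges at $\{u,v\}$ left over are not ``degenerate''. In the second stage these leftover edges, together with $uv$, form a book graph with some pages deleted, which is finished by one use of~(b), repeated use of~(a) on triples of intact pages, and a bounded number of small corrections for the residual pendant edges at $u$ and at $v$, leaving at most two edges.

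The delicate point is the first stage together with the non-degeneracy it has to guarantee for the second. A careless choice of completions in~(c) can leave at $\{u,v\}$ a graph with \emph{no} $P_4$-decomposition --- for instance the double star with centres $u,v$ and at least two leaves at each (every $P_4$ in it would be forced through the central edge $uv$), or a disjoint union of small stars. Already the case of $\G'$ a perfect matching shows that the ``doubled'' completions in~(c) are indispensable and that the bookkeeping must be organised according to $|E(\G')|\bmod 3$ and the rough structure of $\G'$; a short list of sparse or extremal cases ($\G'$ empty, so that the edges of $\G$ at $\{u,v\}$ already form a book graph; $\G'$ a matching; $\G'$ a star; $\G'$ with only a few edges) is then handled by explicit constructions, and the generic situation by the scheme above. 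I expect this coordination of local choices, not any single gadget, to be the main obstacle.
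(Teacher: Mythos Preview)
Your proposal is not a proof but a programme: you set up plausible building blocks (a)--(c), describe two stages in words, and then explicitly concede that ``the coordination of local choices'' is ``the main obstacle'' --- without carrying it out. The sentences ``the freedom \dots\ must be used so that no edge \dots\ is taken twice, and so that the edges at $\{u,v\}$ left over are not degenerate'' and ``a short list of sparse or extremal cases \dots\ is then handled by explicit constructions'' are precisely the content of the theorem, stated as aspirations rather than arguments. In particular, you never specify a path-decomposition of $\G'$, never say which completion in~(c) is chosen for each piece, and never verify that the leftover at $\{u,v\}$ has one of a small number of controllable shapes. As written, nothing prevents the second stage from facing exactly the obstructions you yourself exhibit (double stars, unions of small stars).

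The paper avoids this open-ended bookkeeping by a structural choice you are missing: instead of decomposing $\G'=\G[W]$ into arbitrary short paths, it removes from $\G'$ a \emph{maximal} $P_4$-decomposable subgraph. The remainder $G''$ then contains no $P_4$ at all, and a connected $P_4$-free graph is an isolated vertex, a star, or a triangle. This reduces everything to a finite list of residues: one analyses how the pages $K_{\{\alpha,\beta\},V(G'')}$ interact with isolated vertices (mod~$3$), with triangles (always clean), and with stars split according to the parity of their order, and the resulting $27$ combinations of remnants together with the edge $\alpha\beta$ are dealt with explicitly (with a handful of back-tracking subcases). The maximality step is the key idea that turns your ``coordination'' problem into a bounded case analysis; without it, or an equivalent device, your outline does not close.
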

\begin{proof}
The condition is obviously necessary. For sufficiency, let
 $\alpha$ and $\beta$ be two vertices of degree $|V(\G)|-1$. Delete $\alpha$ and $\beta$ in $\Gamma$,
 as to obtain a graph $G$.
Let $G'$ be a maximal $P_4$-decomposable subgraph of $G$ and remove from $G$ the edges of $G'$, determining a new graph $G''$.
In general, $G''$  is not connected and its
connected components are either isolated  vertices or  stars  or cycles of length $3$;
call $\cal I$, $\cal S$ and $\cal C$ their (possibly empty) sets.
Let $\Gamma'$ be the graph obtained removing the edges of $G'$ from $\Gamma$.
Clearly, $|E(\Gamma)|\equiv 0\pmod 3$ implies  $|E(\Gamma')|\equiv~0\pmod3$; thus it remains to show that $E(\Gamma')$ is $P_4$-decomposable.
Obviously $\alpha$ and $\beta$ are of degree $|V(\G)|-1$ also in $\Gamma'$.
Let $A=\{\alpha,\beta\}$ and consider the following decomposition  $\Gamma'=K_A\cup K_{A,{\cal I}}\cup({\cal C}\cup K_{A,V({\cal C})})\cup ({\cal S}\cup K_{A,V({\cal S})})$.
 We begin by providing, separately, $P_4$-decompositions of $K_{A,{\cal I}}$, ${\cal C}\cup K_{A,V({\cal C})}$ and ${\cal S}\cup K_{A,V({\cal S})}$.

$i$)~It is easy to see that for any $3$-subset of $\cal I$, say $H_3$,
the graph $K_{A,H_3}$ has a $P_4$-decomposition. Thus, depending on the congruence class modulo $3$ of $|{\cal I}|$, $K_{A,{\cal I}}$ can be partitioned into a
$P_4$-decomposition together with the following possible remnants.
\begin{table}[h]
\begin{small}
\begin{center}
\begin{tabular}{c|c|c}
\hline
      $(i_1)$ \ {\scriptsize $|{\cal I}|\equiv 0\pmod 3$} &  $(i_2)$ \ {\scriptsize $|{\cal I}|\equiv 1 \pmod 3$}&     $(i_3)$ \ {\scriptsize
      $|{\cal I}|\equiv 2 \pmod 3$} \cr
      \hline
       the set $\emptyset$  & the path $[\alpha,h,\beta]$ & the cycle $(h_1,\alpha,h_2,\beta)$  \cr
  &with $h\in {\cal I}$ &  with $h_1,h_2\in {\cal I}$ \cr
 \hline
\end{tabular}
\end{center}
\end{small}
\caption{Case $i$.}
\end{table}

\smallskip

$ii$)~For any $3$-cycle $C \in\cal C$, the graph $C\cup K_{A,V(C)}$ has a $P_4$-decomposition. Thus,
${\cal C}\cup K_{A,V({\cal C})}$ also admits a $P_4$-decomposition.

\smallskip

$iii$)~It is not difficult to see that, for any star $S_c\in {\cal S}$ of center $c$, the graph
$S_c\cup K_{A,V(S_c)}$ has a partition into a $P_4$-decomposition together with either the path $[\alpha,c,\beta]$ or the graph
$(\alpha,c,\beta,v)\cup [c,v]$, where $v$ is any
external vertex, depending on whether the number of vertices of $S_c$ is odd  or even. Let ${\cal S}_1$
(respectively ${\cal S}_2$)
be the set of stars with an odd (even) number of vertices.
For any three stars of ${\cal S}_1$ (${\cal S}_2$) the remnants give $P_4$-decomposable graphs. So
${\cal S}_1\cup K_{A,V({\cal S}_1)}$, as well as ${\cal S}_2\cup K_{A,V({\cal S}_2)}$,  can be partitioned into a $P_4$-decomposition together with
the possible remnants outlined in Tables 2 and 3.
\begin{table}[H]
\begin{small}
\begin{center}
\begin{tabular}{c|c|c}
\hline
      $(iii_{11})$  &  $(iii_{12})$ & $(iii_{13})$ \cr
 {\scriptsize $|{\cal S}_1|\equiv 0\pmod 3$} &  {\scriptsize $|{\cal S}_1|\equiv 1\pmod 3$} & {\scriptsize $|{\cal S}_1|\equiv 2\pmod 3$}\cr
      \hline
  & the path $[\alpha,c,\beta]$ & the cycle $(c_1,\alpha,c_2,\beta)$\cr
$\emptyset$  & where & where \cr
  &$c$ is the center of a star  &  $c_1,c_2$ are centers of two stars \cr
 \hline
\end{tabular}
\end{center}
\end{small}
\caption{Case $iii_1$: ${\cal S}_1\cup K_{A,V({\cal S}_1)}$. }
\end{table}
\begin{table}[h]
\begin{small}
\begin{center}
\begin{tabular}{c|c|c}
\hline
$(iii_{21})$  &  $(iii_{22})$ &
 $(iii_{23})$ \cr
 {\scriptsize $|{\cal S}_2|\equiv 0\pmod 3$} &  {\scriptsize $|{\cal S}_2|\equiv 1\pmod 3$} & {\scriptsize $|{\cal S}_2|\equiv 2\pmod 3$}\cr
\hline
&the graph & the graph \cr
 & {\scriptsize$(\alpha,c,\beta,v)\cup [c,v]$} & {\scriptsize$\bigcup_{i=1}^{2}(\alpha,c_i,\beta,v_i)\cup [c_i,v_i]$}\cr
 $\emptyset$& where $c$ is the center & where $c_1,c_2$ are centers \cr
 & and $v$ is an external vertex & and $v_1,v_2$ are external vertices \cr
& of a star  &  of two stars \cr
 \hline
\end{tabular}
\end{center}
\end{small}
\caption{Case $iii_2$: ${\cal S}_2\cup K_{A,V({\cal S}_2)}$.}
\end{table}
\noindent
The remnants from  $i)$, $iii_1)$ and $iii_2)$ together with the edge $[\alpha,\beta]$ can be combined in
$27$ different ways to obtain $27$ connected graphs with  $t$ edges.
It is a routine to check that we have exactly $9$ cases with $t\equiv i\pmod 3$,
for $i=0,1,2$.

\noindent In Table 4 we will list in detail the $9$ cases with  $t\equiv 0\pmod 3$ and,
for each of them, in Table 5 we give the corresponding graph.
\begin{table}[H]
{\small \begin{tabular}{c}
 \begin{tabular}{|l|l|l|l|}
\hline
&$i$&$iii_1$&$iii_2$ \cr
\hline
$a_1$&$\emptyset$& $\emptyset$ &$iii_{22} $\cr
\hline
$a_2$&$\emptyset$& $iii_{13}$ &$iii_{23} $ \cr
\hline
$a_3$&$\emptyset$& $iii_{12}$ &$\emptyset$ \cr
\hline
\end{tabular}~
\begin{tabular}{|l|l|l|l|}
\hline
&$i$&$iii_1$&$iii_2$ \cr
\hline
$a_4$&$i_2$& $\emptyset$ &$\emptyset$\cr
\hline
$a_5$&$i_2$& $iii_{13}$ &$iii_{22} $ \cr
\hline
$a_6$&$i_2$& $iii_{12}$ &$iii_{23} $ \cr
\hline
\end{tabular}~
\begin{tabular}{|l|l|l|l|}
\hline
&$i$&$iii_1$&$iii_2$ \cr
\hline
$a_7$&$i_3$& $\emptyset$ &$iii_{23} $\cr
\hline
$a_8$&$i_3$& $iii_{13}$ &$\emptyset$ \cr
\hline
$a_9$&$i_3$& $iii_{12}$ &$iii_{22} $ \cr
\hline
\end{tabular}
\end{tabular}}
\caption{$t\equiv 0 \pmod 3$.}
\end{table}

\begin{figure}[htbp]
\psfrag{a}{$\alpha$}
\psfrag{b}{$\beta$}
\centering
\begin{minipage}[c]{.25\textwidth}
\begin{center}
\includegraphics[width=.35\textwidth]{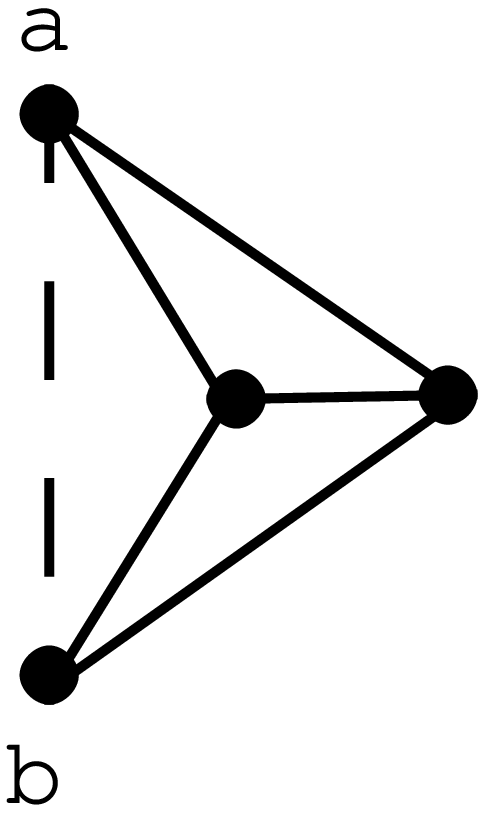}
\caption{Case $a_1$.}\label{Fig1}
\end{center}
\end{minipage}%
\hspace{5mm}%
\begin{minipage}[c]{.25\textwidth}
\begin{center}
\includegraphics[width=.65\textwidth]{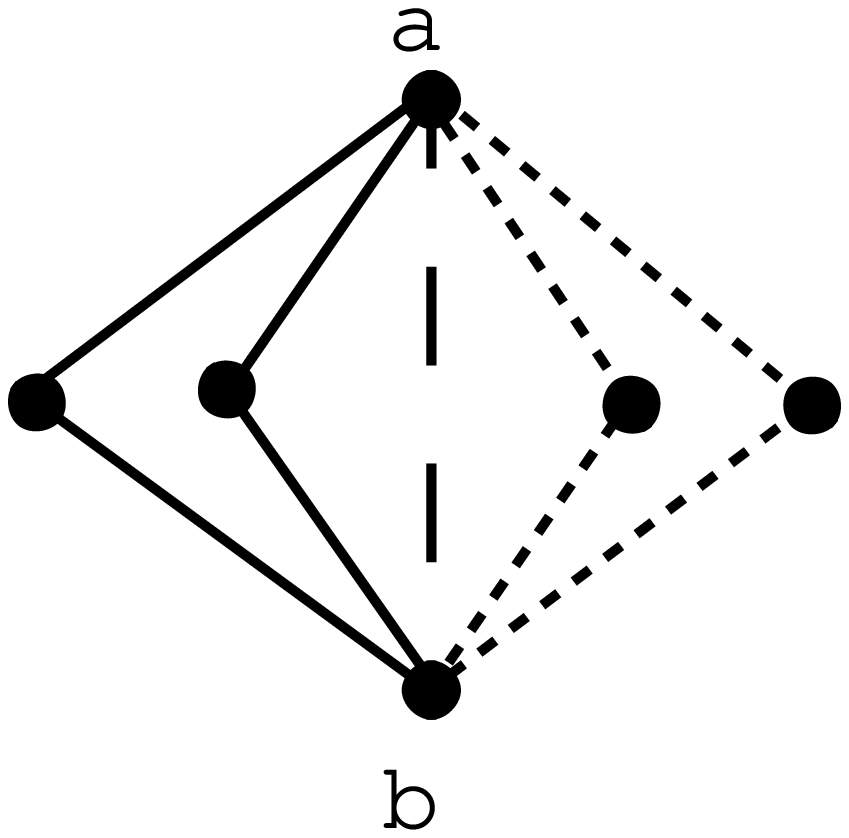}
\caption{Case $a_8$.}\label{Fig2}
\end{center}
\end{minipage}%
\hspace{5mm}%
\begin{minipage}[c]{.40\textwidth}
\begin{center}
\includegraphics[width=.5\textwidth]{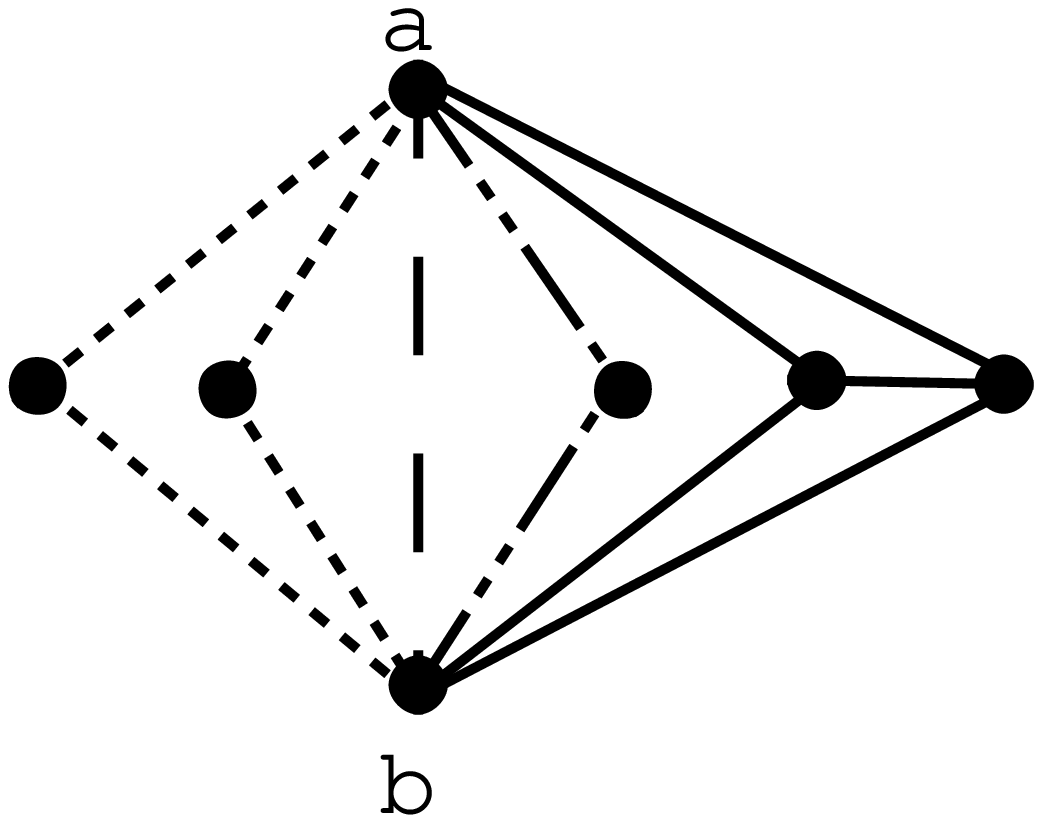}
\caption{Cases $a_5$ and $a_9$.}\label{Fig3}
\end{center}
\end{minipage}%
\end{figure}
\begin{figure}[h]
\psfrag{a}{$\alpha$}
\psfrag{b}{$\beta$}
\psfrag{c}{$c$}
\psfrag{v}{$v$}
\psfrag{h}{$h$}
\centering
\begin{minipage}[c]{.40\textwidth}
\begin{center}
\includegraphics[width=.85\textwidth]{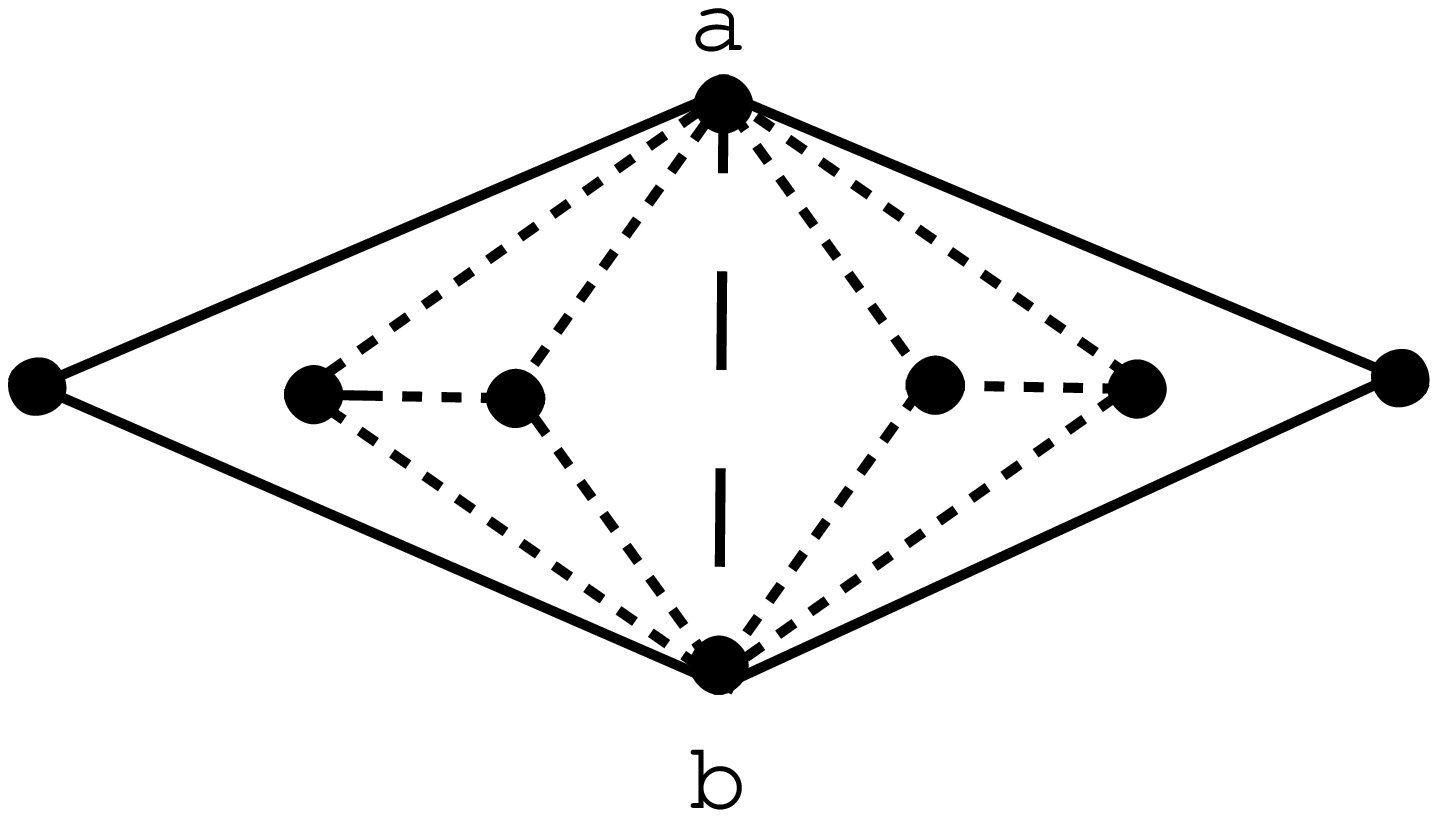}
\caption{Cases $a_2$, $a_6$ and $a_7$.}\label{Fig4}
\end{center}
\end{minipage}
\hspace{5mm}
\begin{minipage}[c]{.40\textwidth}
\begin{center}
\includegraphics[width=0.22\textwidth]{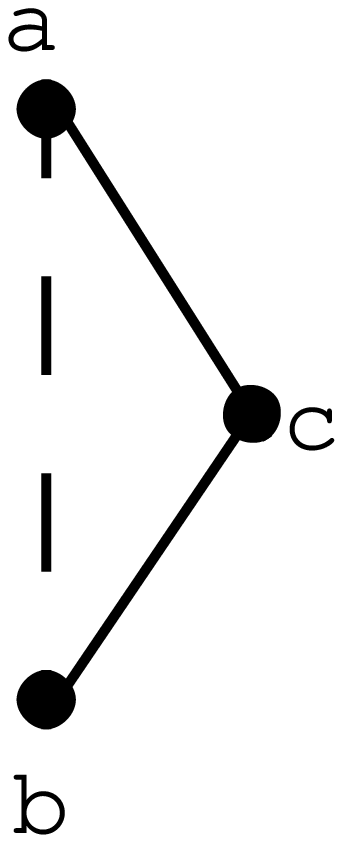}\quad\quad\quad\quad
\includegraphics[width=0.22\textwidth]{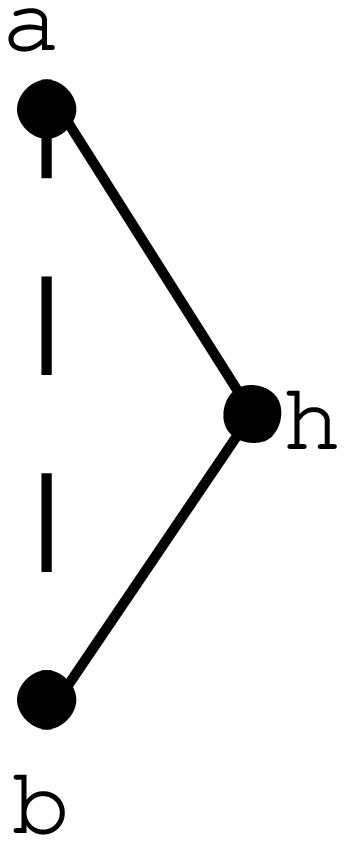}
\caption{Cases $a_3$ and $a_4$.}\label{Fig5}
\end{center}
\end{minipage}
\begin{center}
Table 5: Graphs of the remnants plus edge $[\alpha,\beta]$.
\end{center}
\end{figure}
\noindent
It is easy to determine a $P_4$-decomposition of the graphs in Figures \ref{Fig1}, \ref{Fig2}, \ref{Fig3}, \ref{Fig4}.
In cases $a_3$ and $a_4$ (Figure \ref{Fig5}) a $P_4$-decomposition is clearly not possible,
thus we proceed back tracking one step in the construction. How to deal with case $a_3$
is explained in Figure \ref{Resti1}.
\begin{figure}[h]
\psfrag{a}{$\alpha$}
\psfrag{b}{$\beta$}
\psfrag{c}{$c$}
\psfrag{v}{$v$}
\psfrag{h}{$h$}
\psfrag{v1}{$v_1$}
\psfrag{v2}{$v_2$}
\psfrag{v3}{$v_3$}
\psfrag{a1}{$a_1$}
\psfrag{a2}{$a_2$}
\psfrag{a3}{$a_3$}
\psfrag{h1}{$h_1$}
\psfrag{h2}{$h_2$}
\psfrag{h3}{$h_3$}
\begin{minipage}{.30\textwidth}
\centering
\includegraphics[width=.80\textwidth]{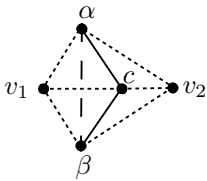}\\
\end{minipage}
\begin{minipage}{.70\textwidth}
\caption{We recover the two $P_4$'s from 2 radii of the star of center $c$.}\label{Resti1}
\end{minipage}\\
\end{figure}

\noindent
In case $a_4$ we have to distinguish several subcases depending on the size of
$\cal{I,C}$ and $\cal S$.
When $|{\cal I}|>1$ see Figure \ref{Resti2}. For $|{\cal I}|=1$ and $|{\cal C}|\neq 0$,
see Figure \ref{Resti4}.
\begin{figure}[H]
\psfrag{a}{$\alpha$}
\psfrag{b}{$\beta$}
\psfrag{c}{$c$}
\psfrag{v}{$v$}
\psfrag{v1}{$v_1$}
\psfrag{v2}{$v_2$}
\psfrag{v3}{$v_3$}
\psfrag{a1}{$a_1$}
\psfrag{a2}{$a_2$}
\psfrag{a3}{$a_3$}
\psfrag{h}{$h$}
\psfrag{h1}{$h_1$}
\psfrag{h2}{$h_2$}
\psfrag{h3}{$h_3$}
\begin{minipage}{.30\textwidth}
\centering
\includegraphics[width=.80\textwidth]{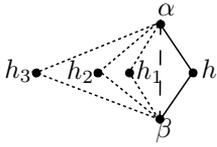}\\
\end{minipage}
\begin{minipage}{.70\textwidth}
\caption{If $|{\cal I}|>1$, we recover the two $P_4$'s from 3 vertices of $\cal I$.}\label{Resti2}
\end{minipage}
\end{figure}
\begin{figure}[H]
\psfrag{a}{$\alpha$}
\psfrag{b}{$\beta$}
\psfrag{c}{$c$}
\psfrag{v}{$v$}
\psfrag{v1}{$v_1$}
\psfrag{v2}{$v_2$}
\psfrag{v3}{$v_3$}
\psfrag{b1}{$b_1$}
\psfrag{b2}{$b_2$}
\psfrag{b3}{$b_3$}
\psfrag{h}{$h$}
\psfrag{h1}{$h_1$}
\psfrag{h2}{$h_2$}
\psfrag{h3}{$h_3$}
\begin{minipage}{.30\textwidth}
\centering
\includegraphics[width=.80\textwidth]{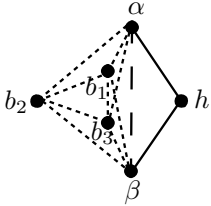}\\
\end{minipage}
\begin{minipage}{.70\textwidth}
\caption{If $|{\cal I}|=1$ and $|{\cal C}|\neq 0$ we recover the three $P_4$'s from a $C_3$.}\label{Resti4}
\end{minipage}
\end{figure}
\noindent
When $|{\cal I}|=1$ and $|{\cal C}|= 0$ we have two possibilities. If there is
one star of $\cal S$ with at least two edges, we proceed as explained in Figure  \ref{Fig2radii}.
\begin{figure}[H]
\psfrag{a}{$\alpha$}
\psfrag{b}{$\beta$}
\psfrag{c}{$c$}
\psfrag{v}{$v$}
\psfrag{v1}{$v_1$}
\psfrag{v2}{$v_2$}
\psfrag{v3}{$v_3$}
\psfrag{a1}{$a_1$}
\psfrag{a2}{$a_2$}
\psfrag{a3}{$a_3$}
\psfrag{h}{$h$}
\psfrag{h1}{$h_1$}
\psfrag{h2}{$h_2$}
\psfrag{h3}{$h_3$}
\begin{minipage}{.30\textwidth}
\centering
\includegraphics[width=.80\textwidth]{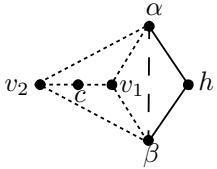}\\
\end{minipage}
\begin{minipage}{.70\textwidth}
\caption{If $|{\cal I}|= 1$, $|{\cal C}|= 0$, and $\exists S_c \in{\cal S}$
with $P_3\leq S_c$ we recover the
two $P_4$'s from 2 radii of $S_c$.}\label{Fig2radii}
\end{minipage}
\end{figure}
\noindent
Otherwise, $G''$ consists of an isolated vertex $h$ and a set $\cal P$ of disjoint $P_2$'s.
Since $|E(\G')|\equiv 0 \pmod 3$, the size of $\cal P$ is also divisible by $3$, let $|{\cal P}|=3p$.
It is easy to see that for any $3$-subset of $\cal P$, say $P^3$, the graph $K_{A,P^3}$
has a $P_4$-decomposition. After $p-1$ steps,
the remnant is the graph in Figure \ref{SoloP3}, which likewise admits a $P_4$-decomposition.
This concludes the case $t \equiv 0 \pmod 3$.
\begin{figure}[H]
\psfrag{a}{$\alpha$}
\psfrag{b}{$\beta$}
\psfrag{h}{$h$}
\begin{minipage}{.30\textwidth}
\centering
\includegraphics[width=.80\textwidth]{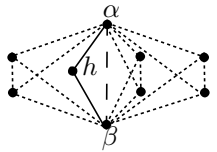}\\
\end{minipage}
\begin{minipage}{.70\textwidth}
\caption{If $|{\cal I}|=1$, $|{\cal C}|=0$ and $S$ is a disjoint union of $P_2$'s
we recover the $15$ edges from the last but one step.}\label{SoloP3}
\end{minipage}
\end{figure}

\noindent
With similar arguments, when $t\equiv 1,2\pmod 3$ it is possible to find a $P_4$-decomposition of $E(\G)$
 leaving as remnants, respectively, one or two edges.
\end{proof}

\section{Embeddings and down-links to $P_4$-designs}\label{embedding}
The results presented in the previous section are used to prove
the existence of embeddings and down-links to path designs.
In particular, we shall focus our attention on
$P_4$-decompositions.

\begin{thm}\label{partial}
Any partial $(K_v,P_4)$-design can be embedded into a $(K_n,P_4)$-design for any admissible $n\geq v+2$.
\end{thm}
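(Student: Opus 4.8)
The plan is to reduce the statement to a single application of Theorem~\ref{every}. First I would fix notation: let $\cP$ be a partial $(K_v,P_4)$-design on a vertex set $V$ with $|V|=v$, and let $L$ be its \emph{leave}, that is, the graph on $V$ whose edge set consists of exactly those edges of $K_V$ that are covered by no path of $\cP$. Fix an admissible $n\geq v+2$ and adjoin a set $W$ of $n-v$ new vertices, so that $|W|\geq 2$ and $|V\cup W|=n$.

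Next I would pass to the graph $\Gamma=L\cup K_{V,W}\cup K_W$ on the vertex set $V\cup W$. The key observation is that a $P_4$-decomposition $\cD$ of $\Gamma$, together with the paths of $\cP$, is precisely a $(K_n,P_4)$-design containing $\cP$ (the edge sets of $\cP$, of $L$, of $K_{V,W}$ and of $K_W$ partition $E(K_n)$), so it suffices to show that $\Gamma$ is $P_4$-decomposable. For this I would check the hypotheses of Theorem~\ref{every}: any vertex $w\in W$ is joined to every other vertex of $\Gamma$ --- to all of $V$ through $K_{V,W}$ and to all of $W\setminus\{w\}$ through $K_W$ --- hence has degree $n-1=|V(\Gamma)|-1$; since $|W|\geq 2$, the graph $\Gamma$ does contain (at least) two vertices of degree $|V(\Gamma)|-1$.

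The remaining ingredient is the divisibility condition in Theorem~\ref{every}. I would note that $\Gamma$ is obtained from $K_n$ by deleting exactly the $3|\cP|$ edges covered by $\cP$ (all of which lie inside $V$), so $|E(\Gamma)|=\binom{n}{2}-3|\cP|$. Since $n$ is admissible, $n(n-1)\equiv 0\pmod 6$, whence $\binom{n}{2}\equiv 0\pmod 3$ and therefore $|E(\Gamma)|\equiv 0\pmod 3$. Theorem~\ref{every} then provides a $P_4$-decomposition $\cD$ of $\Gamma$, and $\cP\cup\cD$ is the required embedding into a $(K_n,P_4)$-design.

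I do not anticipate a real obstacle here: essentially all of the work is done by Theorem~\ref{every}, and what remains are two elementary checks --- that the two adjoined vertices automatically attain full degree in the leave-plus-join graph, and that its number of edges is a multiple of $3$ exactly when $n$ is admissible. The only subtle point worth emphasising in the write-up is the role of the bound $n\geq v+2$ (as opposed to $n\geq v+1$): it is needed precisely to guarantee the existence of the two full-degree vertices demanded by Theorem~\ref{every}.
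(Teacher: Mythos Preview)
Your proposal is correct and follows exactly the same route as the paper: adjoin at least two new vertices, form the complement of the partial design inside $K_n$, and invoke Theorem~\ref{every}. Your write-up is in fact slightly more explicit than the paper's, since you spell out both why the adjoined vertices have full degree and why $|E(\Gamma)|\equiv 0\pmod 3$ (the paper leaves these checks implicit).
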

\begin{proof}
Let $\cB$ be a partial $(K_v,P_4)$-design.
Let $A$ be a set of vertices disjoint from $V(K_v)$ with $v+|A|\equiv 0,1 \pmod 3$ and $|A|\geq 2$.
Let  $\Gamma$ be the graph such that $V(\Gamma)=V(K_v)\cup A$ and $E(\Gamma)=E(K_{v+|A|})\setminus E({\cal B})$.
 Since $|A|\geq 2$, by Theorem \ref{every} there exists a $(\Gamma,P_4)$-design
$\cB'$ and, clearly, $\cB\cup \cB'$ is a  $(K_{v+|A|,P_4})$-design.
\end{proof}

\begin{cor}\label{n>v+1}
For any  $(K_v,\Gamma)$-design with $P_4\leq \Gamma$
 $$\{n\geq v+2 \  | \ n\equiv0,1\pmod3\}\subseteq
{\cal L}_2\Gamma(v)\subseteq {\cal L}_1\Gamma(v).$$
\end{cor}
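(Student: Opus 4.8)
The plan is to deduce Corollary \ref{n>v+1} directly from Theorem \ref{partial} together with the elementary set-theoretic relations between $\cL_1$ and $\cL_2$. The statement has two inclusions: the right-hand one, $\cL_2\Gamma(v)\subseteq\cL_1\Gamma(v)$, and the left-hand one, $\{n\geq v+2\mid n\equiv 0,1\pmod 3\}\subseteq\cL_2\Gamma(v)$. I would dispose of the easy inclusion first and then concentrate on the substantive one.

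For the right-hand inclusion, note that if $n\in\cL_2\Gamma(v)$ then \emph{every} $(K_v,\Gamma)$-design can be down-linked to a $\Gamma'$-design of order $n$; since by \cite[Proposition 3.2]{BGP} there exists at least one $(K_v,\Gamma)$-design (as $v$ is admissible), in particular \emph{some} $(K_v,\Gamma)$-design down-links to a $\Gamma'$-design of order $n$, i.e.\ $n\in\cL_1\Gamma(v)$. This is immediate from the definitions in (I) and (II) and requires no work.

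For the main inclusion, fix $n\geq v+2$ with $n\equiv 0,1\pmod 3$, and let $\cB$ be an arbitrary $(K_v,\Gamma)$-design with $P_4\leq\Gamma$. I would first replace each block $B\in\cB$ by a fixed copy of $P_4$ contained in $B$; since $P_4\leq\Gamma$ and each $B$ is isomorphic to $\Gamma$, this selection is possible, and the resulting $P_4$'s are edge-disjoint because the blocks of $\cB$ are, so they form a \emph{partial} $(K_v,P_4)$-design $\cD$. Now apply Theorem \ref{partial}: since $n\geq v+2$ and $n\equiv 0,1\pmod 3$ makes $n$ admissible for $P_4$-designs (the condition $n(n-1)\equiv 0\pmod 6$ is equivalent to $n\equiv 0,1\pmod 3$), the partial design $\cD$ embeds into a genuine $(K_n,P_4)$-design $\cB'$. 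The embedding means $\cD\subseteq\cB'$, so the assignment sending $B\in\cB$ to the copy of $P_4$ in $\cD$ chosen from $B$ is a function $\cB\to\cB'$ with $f(B)\leq B$; that is precisely a down-link. Hence every $(K_v,\Gamma)$-design down-links to a $P_4$-design of order $n$, so $n\in\cL_2\Gamma(v)$.

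The only point needing a little care — and the closest thing to an obstacle — is the translation of ``admissible $n$ for $P_4$'' into the congruence $n\equiv 0,1\pmod 3$ so that Theorem \ref{partial} genuinely applies: one must check that $n(n-1)\equiv 0\pmod{2(4-1)}=\pmod 6$ holds exactly when $n\equiv 0$ or $1\pmod 3$ (the factor of $2$ being automatic since $n(n-1)$ is always even), which is the recalled existence condition from \cite{T}. Everything else is bookkeeping: choosing the $P_4$'s inside the blocks of $\cB$, observing edge-disjointness, and reading off the down-link from the embedding.
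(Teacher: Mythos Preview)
Your proof is correct and follows essentially the same approach as the paper: choose a $P_4$ inside each block of an arbitrary $(K_v,\Gamma)$-design, observe that these form a partial $(K_v,P_4)$-design, invoke Theorem~\ref{partial} to embed it into a $(K_n,P_4)$-design for any admissible $n\geq v+2$, and read off the down-link. The paper's proof is terser and does not spell out the trivial inclusion $\cL_2\Gamma(v)\subseteq\cL_1\Gamma(v)$ or the translation of admissibility into $n\equiv 0,1\pmod 3$, but the substance is identical.
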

\begin{proof}
Let $\cB$ be a $(K_v,\Gamma)$-design with $P_4\leq \Gamma$.
Choose a $P_4$ in each block of $\cB$ and call $\cal P$ the set of such $P_4$'s. Obviously, $\cal P$ is a partial
$P_4$-decomposition of $K_v$. Hence, by Theorem \ref{partial}, $\cal P$ can be embedded into a $(K_n,P_4)$-design $\cB'$
for any admissible $n\geq v+2$.
The construction also guarantees the existence of a down-link from $\cB$ to $\cB'$.
\end{proof}

\begin{thm}\label{thm:emb}
For any even integer $k$,
a $P_k$-design of order $n\equiv 0,1(\mod k-1)$ can be embedded into a $P_{k}$-design of any order $m>n+1$
with $m\equiv 0,1(\mod k-1)$.
\end{thm}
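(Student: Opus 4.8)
The plan is to mimic the strategy of Theorem~\ref{partial}, reducing the embedding problem to an existence statement for a $P_k$-decomposition of a suitable graph, and then to build that decomposition by splitting it into a complete graph on the new vertices, a complete bipartite graph between old and new vertices, and the ``leftover'' edges of $K_n$ not covered by the given design. Concretely, let $\cB$ be a $P_k$-design of order $n$ on vertex set $V$, let $A$ be a set of $m-n\geq 2$ new vertices, and put $\Gamma$ for the graph on $V\cup A$ whose edge set is $E(K_m)\setminus E(\cB)=E(K_A)\cup E(K_{V,A})\cup\bigl(E(K_n)\setminus E(\cB)\bigr)$. It suffices to show $\Gamma$ admits a $P_k$-decomposition, since then that decomposition together with $\cB$ is the desired $P_k$-design of order $m$.

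First I would dispose of $K_A$: since $m-n\equiv 0,1\pmod{k-1}$ together with $n\equiv 0,1\pmod{k-1}$ forces $|A|(|A|-1)\equiv 0\pmod{2(k-1)}$ in each of the relevant residue combinations (the case $n\equiv 1$, $m\equiv 1$ gives $|A|\equiv 0$; $n\equiv 0$, $m\equiv 1$ gives $|A|\equiv 1$; etc.), by Tarsi's theorem \cite{T} $K_A$ itself has a $P_k$-decomposition. Next I would handle the bipartite part $K_{V,A}$ together with the residual edges $E(K_n)\setminus E(\cB)$. The key tool here is Proposition~\ref{bip}, which gives $P_k$-decompositions of $K_{k-1,k-2}$ and $K_{k-1,k}$ for even $k$; by taking vertex-disjoint copies and, where the part sizes are not exact multiples, absorbing the few remaining edges of $K_{V,A}$ into paths that also use some of the still-uncovered edges inside $V$, one builds up a $P_k$-decomposition of $K_{V,A}\cup(E(K_n)\setminus E(\cB))$. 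The congruence $|E(K_m)|\equiv |E(\cB)|\cdot(k-1)\equiv 0\pmod{k-1}$, hence $|E(\Gamma)|\equiv 0\pmod{k-1}$, guarantees that no stray edges are left over at the end.

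The main obstacle, and the step requiring the most care, is the bookkeeping for the bipartite-plus-residual part when $|A|$ is small (close to its minimum value $2$) or when $|A|$ is not a multiple of $k-2$ or $k$: in those boundary cases the bricks supplied by Proposition~\ref{bip} do not tile $K_{V,A}$ cleanly, and one must route a bounded number of $P_k$'s through both sides of the bipartition and through the deficiency graph $E(K_n)\setminus E(\cB)$ of the partial design. This is exactly the analogue of the back-tracking analysis in the proof of Theorem~\ref{every}, and the argument there — decomposing a graph with two vertices of near-full degree by peeling off a maximal $P_4$-decomposable subgraph and treating the star/cycle/isolated-vertex remnants case by case — is the template; for general even $k$ one replaces ``$P_4$'' by ``$P_k$'' and uses Proposition~\ref{bip} in place of the ad hoc small decompositions. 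Once these finitely many boundary configurations are checked, the union of the three pieces is the required $P_k$-decomposition of $\Gamma$, completing the embedding.
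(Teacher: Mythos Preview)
Your plan has two genuine gaps. First, since $\cB$ here is a \emph{full} $P_k$-design of order $n$ (not a partial one as in Theorem~\ref{partial}), the set $E(K_n)\setminus E(\cB)$ is empty; there are no ``still-uncovered edges inside $V$'' into which to absorb bipartite leftovers, so that mechanism is simply unavailable. Second, and fatally, your claim that $K_A$ always admits a $P_k$-decomposition is false: when $n\equiv 1$ and $m\equiv 0\pmod{k-1}$ one has $|A|\equiv k-2\pmod{k-1}$, and then $|A|(|A|-1)\equiv (k-2)(k-3)\not\equiv 0\pmod{2(k-1)}$ in general (for $k=4$ this is $2\bmod 6$), so Tarsi's theorem does not apply to $K_A$ in that case and the very first step of your plan fails. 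Your fallback appeal to a $P_k$-analogue of Theorem~\ref{every} is also unsupported: that theorem is proved only for $P_4$, via a case analysis specific to paths of three edges.

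The paper's proof avoids all of this. Writing $s=m-n$, it decomposes $K_s\cup K_{n,s}$ directly by a four-case residue analysis of $(n,s)$ modulo $k-1$, tiling in each case with copies of $K_s$ (or $K_{s+1}$), $K_{k-1,k-1}$, $K_{k-1,k}$ and $K_{k-1,k-2}$. The problematic case $n\equiv 1$, $m\equiv 0$ is handled by the regrouping $K_s\cup K_{1,s}=K_{s+1}$, moving one old vertex across so that $s+1\equiv 0\pmod{k-1}$ and Tarsi applies. The bipartite bricks are supplied by Parker~\cite{P} for $K_{k-1,k-1}$ (a piece you do not mention) and by Proposition~\ref{bip} for $K_{k-1,k}$ and $K_{k-1,k-2}$; no residual absorption or back-tracking is needed.
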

\begin{proof}
Let ${\cB}$ be a $(K_n,P_k)$-design with $n\equiv 0,1\pmod{k-1}$ and let $m=n+s\equiv0,1 (\mod k-1)$.
As $K_{n+s}=K_n\cup K_s\cup K_{n,s}$, for the existence
of a $(K_m,P_k)$-design embedding $\cB$ it is enough to find a $P_k$-decomposition of
$K_s\cup K_{n,s}$. Since $n, n+s\equiv 0,1 (\mod k-1)$, one of the following cases occurs
\begin{itemize}
\item $n=\lambda(k-1), s=\mu(k-1)\ \Rightarrow\  K_s\bigcup K_{n,s}=K_s\bigcup \lambda\mu K_{k-1,k-1}$
\item $n=\lambda(k-1), s=1+\mu(k-1)\ \Rightarrow\  K_s\bigcup K_{n,s}=K_s\bigcup K_{\lambda(k-1),k+(\mu-1)(k-1)}=$ \\
$= K_s\bigcup\lambda K_{k-1,k}\bigcup\lambda(\mu-1)K_{k-1,k-1}$
\item $n=1+\lambda(k-1), s=\mu(k-1)\ \Rightarrow \  K_s\bigcup K_{n,s}=K_s\bigcup K_{k+(\lambda-1)(k-1),\mu(k-1)}= $   \\
 $=K_s\bigcup\mu K_{k,k-1}\bigcup \mu(\lambda-1)K_{k-1,k-1}$
\item $n=1+\lambda(k-1), s=k-2+\mu(k-1) \ \Rightarrow \  K_s\bigcup K_{n,s}=K_s\bigcup  K_{1+\lambda(k-1),s}=$\\
$=K_s\bigcup K_{1,s}\bigcup K_{\lambda(k-1),s} = K_{s+1}\bigcup K_{\lambda(k-1),k-2+\mu(k-1)}=$\\
$= K_{s+1}\bigcup \lambda K_{k-1,k-2}\bigcup \lambda\mu K_{k-1,k-1}$
\end{itemize}
So, to find a $P_k$-decomposition of $K_s\cup K_{n,s}$ it is sufficient to know $P_k$-decompositions of
\begin{itemize}
\item $K_s$ and $K_{s+1}$, which exist by \cite{T},
\item $K_{k-1,k-1}$, whose existence is proved in \cite{P},
\item$K_{k-1,k}$ and $K_{k-1,k-2}$, whose existence follows from Proposition \ref{bip}.
\end{itemize}
\end{proof}
\noindent
The following corollary is a straightforward consequence of Theorem \ref{thm:emb}.
\begin{cor}\label{c34}
If $n\in \cL_i \G (v)$, then
\[\{m\geq n+2\ | \ m\equiv 0,1(\mod 3)\}\subseteq \cL_i \G(v).\]
\end{cor}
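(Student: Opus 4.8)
The plan is to derive Corollary~\ref{c34} directly from Theorem~\ref{thm:emb} (with $k=4$), using the fact that $P_4$ is the relevant small path and that the definitions of $\cL_1\G(v)$ and $\cL_2\G(v)$ both reduce, via the down-link construction, to questions about embedding $P_4$-designs.

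First I would unwind the hypothesis $n\in\cL_i\G(v)$. By definition this means there is a $(K_v,\G)$-design $\cB$ (for $i=1$, some such $\cB$; for $i=2$, every such $\cB$) that can be down-linked to a $(K_n,\G')$-design, where $\G'\le\G$; since we are in the $P_4$ setting, $\G'=P_4$, so $n$ is admissible for $P_4$-designs, i.e.\ $n\equiv 0,1\pmod 3$ (equivalently $n\equiv 0,1\pmod{k-1}$ with $k=4$). Now fix any $m\ge n+2$ with $m\equiv 0,1\pmod 3$. By Theorem~\ref{thm:emb} applied with $k=4$, the $(K_n,P_4)$-design obtained as the down-link image embeds into a $(K_m,P_4)$-design $\cB''$.

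The key step is then to check that this embedding propagates the down-link. Given the down-link $f\colon\cB\to\cB'$ with $\cB'$ a $(K_n,P_4)$-design and $\cB'\subseteq\cB''$, the same function $f$, now viewed as a map $\cB\to\cB''$, still satisfies $f(B)\le B$ for every block $B$, because the image blocks are unchanged and only live inside a larger complete graph; hence $f$ is a down-link from $\cB$ to the $(K_m,P_4)$-design $\cB''$. This shows $m\in\cL_1\G(v)$, and in the case $i=2$, since the argument used an \emph{arbitrary} $(K_v,\G)$-design $\cB$ and the choice of embedded $P_4$-design inside each block is made exactly as in the proof of Corollary~\ref{n>v+1}, it shows $m\in\cL_2\G(v)$ as well. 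Therefore $\{m\ge n+2\mid m\equiv 0,1\pmod 3\}\subseteq\cL_i\G(v)$.

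I do not expect a genuine obstacle here: the corollary is essentially bookkeeping once Theorem~\ref{thm:emb} is in hand. The only point requiring a little care is the compatibility of down-links with embeddings — making sure that enlarging the codomain design does not break the relation $f(B)\le B$ — but this is immediate from the definition, since embedding a $P_4$-design into a larger one only adds new blocks and does not alter the existing ones. One should also note explicitly that ``admissible'' in the statement refers to the $P_4$-admissibility condition $m\equiv 0,1\pmod 3$, which is why that congruence appears in the conclusion, matching the hypotheses of Theorem~\ref{thm:emb} with $k=4$.
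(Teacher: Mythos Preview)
Your proposal is correct and follows exactly the approach the paper has in mind: the paper states Corollary~\ref{c34} as ``a straightforward consequence of Theorem~\ref{thm:emb}'', and you have simply spelled out that consequence by composing the given down-link with the embedding of $P_4$-designs guaranteed by Theorem~\ref{thm:emb} for $k=4$. The aside invoking Corollary~\ref{n>v+1} in the $i=2$ case is unnecessary---the hypothesis $n\in\cL_2\G(v)$ already hands you a down-link from an arbitrary $\cB$ to some $(K_n,P_4)$-design, and embedding that design suffices---but it does no harm.
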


\begin{rem}\label{min}
Set $\eta_i=\inf \cL_i \G(v)$. 
By Corollary \ref{c34}, $\cL_i \G(v)$ contains all admissible values $m\geq \eta_i$
apart from (possibly) $\eta_i+1$. Thus to exactly determine the spectra
it is enough to compute $\eta_i$ and ascertain if $\eta_i+1\in \cL_i \G(v)$.
\end{rem}

\section{Cycle systems and path-designs}\label{direct}
Here we shall provide some partial results on the existence
of down-links from cycle systems and path-designs
to $P_4$-designs.\\
We recall that a $k$-cycle system of order $v$, that is
a $(K_v,C_k)$-design, exists if, and only if,
$k\leq v$, $v$ is odd and $v(v-1)\equiv 0\pmod{2k}$; see
 \cite{AGA}, \cite{SJ}.

\begin{thm}
For any admissible $v$ and any $k\geq9$
\[\left\{ n\geq v- \left\lfloor\frac{k-9}{4}\right\rfloor\,\big|\, n\equiv0,1\
 (\mod{3})\right\}\subseteq \cL_2C_{k}(v)\subseteq
\cL_1C_{k}(v).\]
\end{thm}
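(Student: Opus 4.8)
The plan is to exhibit, for each admissible target $n$ with $n\geq v-\lfloor(k-9)/4\rfloor$, a $(K_n,P_4)$-design together with a down-link to it from an arbitrary $C_k$-system $\cB$ of order $v$; the inclusion $\cL_2C_k(v)\subseteq\cL_1C_k(v)$ is trivial. Since distinct blocks of $\cB$ are edge-disjoint, any rule that selects one sub-$P_4$, say $f(B)\leq B$, in each block $B$ automatically produces pairwise edge-disjoint paths; thus $\{f(B):B\in\cB\}$ is a partial $P_4$-decomposition and $f$ is injective. So it will be enough to make this selection in such a way that the chosen paths all lie in a suitable $K_n$ and the leftover graph is $P_4$-decomposable by Theorem \ref{every}.

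Set $d=\lfloor(k-9)/4\rfloor$, so that $d+2=\lfloor(k-1)/4\rfloor$. The combinatorial core is the following observation: if $W$ is any vertex set with $|W|\leq d+2$, then every $k$-cycle contains three consecutive edges whose four vertices all avoid $W$. Indeed, the $s\leq|W|$ vertices of $W$ lying on the cycle cut it into at most $s$ arcs containing $k-s$ vertices altogether; since $s\leq d+2=\lfloor(k-1)/4\rfloor$ forces $4s<k$, some arc has at least $\lceil(k-s)/s\rceil\geq4$ consecutive vertices, and these span the desired $P_4$. This pigeonhole argument is exactly where the quantity $\lfloor(k-9)/4\rfloor$ comes from, and it is the main point of the proof.

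Now fix $n\equiv0,1\pmod3$ with $n\geq v-d$. If $n\leq v$, delete a set $S\subseteq V(K_v)$ of $v-n\leq d$ vertices and set $T=\emptyset$; if $n>v$, set $S=\emptyset$ and let $T$ be a set of $n-v$ new vertices. Pick two further vertices $\alpha,\beta$, taking as many of them as possible from $T$ and the rest from $V(K_v)\setminus S$, and let $W$ be $S$ together with those of $\alpha,\beta$ that lie in $V(K_v)$; then $|W|\leq(v-n)+2\leq d+2$ if $T=\emptyset$ and $|W|\leq1$ otherwise. By the observation, choose in each block $B$ of $\cB$ a sub-$P_4$ $f(B)$ avoiding $W$; all these paths then lie in $K_n$, whose vertex set we take to be $(V(K_v)\setminus S)\cup T$. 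Put $\Gamma=K_n\setminus\bigcup_{B\in\cB}f(B)$. The paths $f(B)$ avoid $\alpha$ and $\beta$, and $T$ meets no block of $\cB$, so $\alpha$ and $\beta$ have degree $n-1$ in $\Gamma$; moreover $|E(\Gamma)|=\binom n2-3|\cB|\equiv\binom n2\equiv0\pmod3$ because $n$ is admissible. Theorem \ref{every} therefore yields a $P_4$-decomposition of $\Gamma$, and adjoining it to $\{f(B):B\in\cB\}$ gives a $(K_n,P_4)$-design to which $f$ is a down-link; hence $n\in\cL_2C_k(v)$.

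Finally, it suffices to run this construction for the two smallest admissible integers $n_0<n_1$ with $n_0\geq v-d$: Corollary \ref{c34} then places every admissible $m\geq n_0+2$ in $\cL_2C_k(v)$ as well, and since $n_1\in\{n_0+1,n_0+2\}$ this covers all admissible $n\geq v-d$. The only genuine difficulty is the pigeonhole observation; the congruence $|E(\Gamma)|\equiv0\pmod3$ is automatic because $3|\cB|$ is a multiple of $3$, and the choice of $S,T,\alpha,\beta$ is routine bookkeeping once one checks that $|W|\leq d+2$ in each case.
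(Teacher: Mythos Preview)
Your proof is correct and follows essentially the same strategy as the paper: reserve $\lfloor(k-9)/4\rfloor+2$ vertices, use pigeonhole on the arcs of each $k$-cycle to find a sub-$P_4$ avoiding them, and then apply Theorem~\ref{every} to decompose the complement in $K_n$. Your presentation is in fact somewhat cleaner, handling every admissible $n\geq v-d$ uniformly (by choosing $S$, $T$, $\alpha$, $\beta$ appropriately) instead of the paper's case analysis on $(v-t)\bmod 3$ followed by an appeal to Remark~\ref{min}.
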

\begin{proof}
Let $k\geq9$ and let $\cB$ be a $(K_v,C_k)$-design. Write
$t=\left\lfloor\frac{k-9}{4}\right\rfloor$.
Take $t+2$ distinct vertices $x_1,x_2,\ldots,x_t,y_1,y_2\in V(K_v)$.
Observe that it is possible to extract from each block $C\in \cB $
a $P_4$ whose vertices are different from $x_1,x_2,\ldots,x_t,y_1,y_2$,
as we are forbidding at most $4(t+1)+2=4t+6=k-3$ edges from any $k$-cycle.
Use these $P_4$'s for the down-link. Let $S$ be the image of the down-link,
considered as a subgraph of $K_{v-t}=K_v\setminus\{x_1,\ldots, x_t\}$
and remove the edges of $S$ from $K_{v-t}$ to obtain a new graph $R$.
It remains to show that $R$ admits a
$P_4$-decomposition. Observe that $|V(R)|=v-t$ and $y_1,y_2$ are two vertices of $R$ of degree
$v-t-1$. To apply Theorem \ref{every}
we have to distinguish some cases according to the congruence class
modulo $3$ of $v-t$.\\
If $v-t\equiv0(\mod\ 3)$, then $|E(R)|\equiv0(\mod\ 3)$ so the existence of a $(R,P_4)$-design is
guaranteed by Theorem \ref{every}.
Furthermore, if we add a vertex to $K_{v-t}$ we can apply Theorem \ref{every} also
to $R'=R \cup K_{1,v-t}$ since $|E(R')|\equiv 0 (\mod\ 3)$. Hence there exist down-links from $\cB$ to $(K_{v-t},P_4)$-designs
and to $(K_{v-t+1},P_4)$-designs.\\
If $v-t\equiv1(\mod\ 3)$, then $|E(R)|\equiv 0 (\mod\ 3)$, hence by
Theorem \ref{every}, there exists a $(R,P_4)$-design. So we determine down-links from $\cB$
to $(K_{v-t},P_4)$-designs.\\
Finally, if $v-t\equiv 2(\mod\ 3)$, it is sufficient to add either $u=1$ or $u=2$ vertices to
$K_{v-t}$ and then apply Theorem \ref{every} to $R''=(K_{v-t}\cup K_u \cup K_{v-t,u})\setminus S$
in order to down-link $\cB$ to $(K_{v-t+1},P_4)$-designs or to $(K_{v-t+2},P_4)$-designs,
respectively.
The statement follows from Remark \ref{min}.
\end{proof}

Arguing exactly as in the previous proof it is possible to prove the following result.

\begin{thm}
For any admissible $v$ and any $k\geq12$
\[\left\{ n\geq v- \left\lfloor\frac{k-12}{4}\right\rfloor\,\big|\, n\equiv0,1\
 (\mod{3})\right\}\subseteq \cL_2P_{k}(v)\subseteq
\cL_1P_{k}(v).\]
\end{thm}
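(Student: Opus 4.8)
The plan is to follow, almost word for word, the proof of the preceding theorem, with the $k$-cycle replaced by the path $P_k$. Set $t=\left\lfloor\frac{k-12}{4}\right\rfloor$ and let $\cB$ be a $(K_v,P_k)$-design. First I would fix $t+2$ distinct vertices $x_1,\dots,x_t,y_1,y_2\in V(K_v)$ and extract from every block $B\in\cB$ a copy of $P_4$ avoiding all of $x_1,\dots,x_t,y_1,y_2$ as vertices; since distinct blocks are edge-disjoint, these $P_4$'s constitute the image $S$ of a down-link. Viewing $S$ as a subgraph of $K_{v-t}=K_v\setminus\{x_1,\dots,x_t\}$ and putting $R=K_{v-t}\setminus S$, one gets a graph on $v-t$ vertices in which $y_1$ and $y_2$ have degree $v-t-1$, so $R$ (once completed, if necessary) is in the scope of Theorem~\ref{every}.

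The single point that differs from the cycle case --- and the reason the saving is $\left\lfloor\frac{k-12}{4}\right\rfloor$ here rather than $\left\lfloor\frac{k-9}{4}\right\rfloor$ --- is the feasibility of that extraction. A block $B\cong P_k=[a_1,\dots,a_k]$ contains exactly $k-3$ copies of $P_4$, the windows $[a_i,a_{i+1},a_{i+2},a_{i+3}]$ with $1\le i\le k-3$, as opposed to the $k$ windows present in a $k$-cycle. Each forbidden vertex occurring in $B$ lies in at most $4$ of these windows, so at most $4(t+2)=4t+8$ of them are ruled out; since $t\le\frac{k-12}{4}$ yields $4t+8\le k-4<k-3$, a usable $P_4$ always remains, which is all that is needed.

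It then remains to push $R$ through Theorem~\ref{every}, distinguishing the residue of $v-t$ modulo $3$ exactly as before. Here one uses $|E(S)|=3\,|\cB|\equiv 0\pmod 3$ together with the fact that $\binom{n}{2}\equiv 0\pmod 3$ precisely for $n\equiv 0,1\pmod 3$. Concretely, for any admissible $n$ with $v-t\le n\le v-t+2$ the graph $R''=\bigl(K_{v-t}\cup K_{n-(v-t)}\cup K_{v-t,\,n-(v-t)}\bigr)\setminus S$ has $n$ vertices, still has the two full-degree vertices $y_1,y_2$, and satisfies $|E(R'')|=\binom{n}{2}-|E(S)|\equiv 0\pmod 3$; by Theorem~\ref{every} it has a $P_4$-decomposition, and adding back the $P_4$'s of $S$ gives a $(K_n,P_4)$-design down-linked to $\cB$. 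For every residue of $v-t$ the two smallest admissible integers $\ge v-t$ fall in $[v-t,\,v-t+2]$, so both are reached directly, and Corollary~\ref{c34} (equivalently Remark~\ref{min}) then carries the conclusion to all admissible $n\ge v-\left\lfloor\frac{k-12}{4}\right\rfloor$; the inclusion $\cL_2 P_k(v)\subseteq\cL_1 P_k(v)$ is automatic.

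I do not expect a genuine obstacle: granted Theorem~\ref{every} and the template of the previous proof, the argument is essentially mechanical. The only places demanding attention are the window count of the second paragraph (fixing the right constant so that a $P_4$ survives in each $P_k$) and the mod-$3$ bookkeeping when $v-t\equiv 2\pmod 3$, where one or two auxiliary vertices must be adjoined to $K_{v-t}$ before Theorem~\ref{every} can be invoked.
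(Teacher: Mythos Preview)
Your proposal is correct and follows exactly the approach indicated by the paper, which for this theorem gives no separate proof at all but simply states that one argues exactly as in the preceding $C_k$ case. You have supplied the missing details accurately, in particular the window-counting argument (a $P_k$ has only $k-3$ sub-$P_4$'s versus $k$ in a $k$-cycle, whence the shift from $\lfloor(k-9)/4\rfloor$ to $\lfloor(k-12)/4\rfloor$) and the mod-$3$ bookkeeping needed to invoke Theorem~\ref{every} after deleting $t$ vertices.
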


\subsection{Small cases}
We shall now investigate in detail the spectrum problems for
$\G=C_4$ and $\G=P_5$. In order to obtain our results, we shall extensively use the method of
\emph{gluing of down-links}, introduced in \cite{BGP}.
We briefly recall the main idea: a down-link from a $(K_v,\Gamma)$-design to a  $(K_n,\Gamma')$-design
can be constructed as union of
down-links between partitions of the domain and the codomain.\\
To give designs suitable for the down-link,
 we will use difference families;
here we recall some preliminaries, for a survey see \cite{AB1}.
Let $\G$ be a graph. A set $\cal F$ of graphs isomorphic to $\G$
with vertices in $\ZZ_v$
is called a $(v,\Gamma,1)$-\emph{difference family} (DF, for short) if
the list $\Delta{\cal F}$ of differences from $\cal F$, namely the list of all
possible differences $x-y$, where $(x,y)$ is an ordered pair of adjacent vertices
of an element of $\cal F$, covers $\ZZ_v\setminus\{0\}$ exactly once.
In \cite{BPBica} it is proved that if ${\cal F}=\{B_1,\ldots,B_t\}$
is a $(v, \Gamma, 1)$-DF, then the collection of graphs $\cB=\{B_i+g\ | \ B_i\in {\cal F},\ g \in \ZZ_v\}$
is a cyclic $(K_v,\G)$-design.

\begin{lem}\label{lem:cp4}
For any $v\equiv 1,9\pmod{24}$, $v>1$, there exists a down-link from a $(K_v,C_4)$-design to a $(K_v,P_4)$-design.
For any $v\equiv 9,17\pmod{24}$ there exists a down-link from a $(K_v,C_4)$-design to a $(K_{v+1},P_4)$-design.
\end{lem}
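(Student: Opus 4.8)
The strategy is to exhibit, for each relevant residue class, an explicit cyclic $(K_v,C_4)$-design via a $(v,C_4,1)$-difference family, and then to down-link it to a $P_4$-design by a uniform ``open up each $4$-cycle'' recipe that is compatible with the cyclic action. Concretely, for $v\equiv 1,9\pmod{24}$ a $(K_v,C_4)$-design exists (the admissibility condition $v(v-1)\equiv 0\pmod 8$ holds precisely for $v\equiv 0,1\pmod 8$, and our residues are odd, so $v\equiv 1\pmod 8$), and one can arrange a $(v,C_4,1)$-DF ${\cal F}=\{B_1,\dots,B_t\}$ with $t=(v-1)/8$. Deleting from each $4$-cycle $B_i=(a,b,c,d)$ a single edge, say $[d,a]$, turns it into the path $[a,b,c,d]$; doing this consistently on every translate $B_i+g$ produces a collection $\cal P$ of $v\cdot t$ paths $P_4$ whose difference list is $\Delta{\cal F}$ minus the differences $\pm(a-d)$ contributed by the deleted edges. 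The key point is to choose \emph{which} edge to delete in each base block so that the leftover differences themselves form a $(v,P_4,1)$-difference-family-like structure: the remaining $3t=(3v-3)/8$ base-path differences must cover $\ZZ_v\setminus\{0\}$ with the $t$ deleted differences exactly not covered, \emph{and} those $t$ missing differences must be re-assemblable into $\lfloor\cdot\rfloor$ extra base $P_4$'s (together with whatever is needed to make the arithmetic $|E|\equiv 0\pmod 3$ work). Since a $(K_v,P_4)$-design exists for $v\equiv 0,1\pmod 6$, and $v\equiv 1,9\pmod{24}$ gives $v\equiv 1,3\pmod 6$, one sees that $v\equiv 1\pmod{24}\Rightarrow v\equiv 1\pmod 6$ (good) while $v\equiv 9\pmod{24}\Rightarrow v\equiv 3\pmod 6$ — so in the $v\equiv 9$ case a $(K_v,P_4)$-design does \emph{not} exist and the down-link must instead land in a $(K_{v+1},P_4)$-design, which explains why $9$ appears in both clauses of the lemma.

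\textbf{First steps.} First I would settle the existence of the starting cyclic $C_4$-systems: cite that $(K_v,C_4)$-designs exist for all admissible $v$ (here all four residues $1,9\pmod{24}$ and $9,17\pmod{24}$ are $\equiv 1\pmod 8$, hence admissible), and either invoke a known cyclic construction or write down a $(v,C_4,1)$-DF explicitly for each residue class modulo $24$. Second, I would fix the down-link rule at the level of base blocks: for each base $4$-cycle choose one edge to suppress, obtaining a base $P_4$; extending by the $\ZZ_v$-action gives a partial $P_4$-decomposition $\cal P$ of $K_v$ and simultaneously a down-link $f$ from the $C_4$-design onto it (each block $C_4+g$ maps to the corresponding $P_4+g\le C_4+g$). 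Third, I would look at the complementary graph $R=K_v\setminus E({\cal P})$: it is the cyclic graph whose connection set is exactly the multiset of $t$ suppressed base differences (and their negatives); I need this to be $P_4$-decomposable. The cleanest route is to pick the suppressed differences so that $R$ is itself a union of short cycles or disjoint paths on which a $P_4$-decomposition is transparent — for instance, if the $t$ suppressed differences are chosen to be $v/\text{(small)}$-type so that $R$ splits into $3$-cycles or into copies of $K_{k-1,k-1}$-like pieces, one can quote Proposition~\ref{bip} and the classical result of \cite{T}.

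\textbf{Handling the two clauses.} For $v\equiv 1,9\pmod{24}$ with target order $v$: here I want $E({\cal P})$ and $E(R)$ to both be decomposable, with $|E(R)|=tv\equiv 0\pmod 3$ automatically (since $|E(K_v)|=\binom v2\equiv 0\pmod 3$ when $v\equiv 0,1\pmod 3$ and $|E({\cal P})|$ is a multiple of $3$ by construction). When $v\equiv 9\pmod{24}$ we do \emph{not} have $v\equiv 0,1\pmod 3$ available for $K_v$ as a $P_4$-host, so instead I would add one new vertex $\infty$, forming $K_{v+1}$, distribute the missing $v$ edges $[\infty,x]$ together with $R$, and invoke Theorem~\ref{every} (two vertices of full degree — $\infty$ and any original vertex now has degree $v$ in the relevant subgraph) to get the $P_4$-decomposition of the residual graph; the down-link is unchanged on the original blocks. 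The same $\infty$-addition device handles $v\equiv 9,17\pmod{24}$ into $K_{v+1}$: note $v+1\equiv 10,18\pmod{24}$, so $v+1\equiv 0,1\pmod 3$, restoring admissibility of a $P_4$-design on $v+1$ points, and one applies Theorem~\ref{every} to $(K_{v+1}\setminus E({\cal P}))$, which has the two full-degree vertices $\infty$ and any vertex not used as a ``middle'' too often.

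\textbf{Main obstacle.} The genuinely delicate step is the combinatorial bookkeeping of differences: one must choose the base $4$-cycles of the DF \emph{and} the edge suppressed in each so that (a) the surviving $3t$ base-path differences still tile $\ZZ_v\setminus\{0\}$ with exactly the $t$ suppressed differences left over, and (b) the leftover graph $R$ (equivalently, the leftover circulant connection set) is one on which a $P_4$-decomposition — or a $P_4$-decomposition-plus-one-vertex — is available from Proposition~\ref{bip}, Theorem~\ref{every}, or \cite{T}. Managing this simultaneously across the residues $1,9,17\pmod{24}$, and in particular arranging that the ``half-difference'' $v/2$ is never the one that has to be suppressed (it has no partner), is where the real work lies; everything downstream — defining $f$, checking $f(B)\le B$, gluing — is routine once the right base family is in hand.
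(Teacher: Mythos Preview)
Your overall direction --- start from a cyclic $(K_v,C_4)$-design given by a difference family, delete one edge from each base $4$-cycle, and then repair the leftover circulant --- is exactly the paper's strategy. But several of the concrete claims you rely on are wrong, and the repair step as you describe it does not go through.

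\medskip

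\textbf{Admissibility error.} A $(K_n,P_4)$-design exists iff $n(n-1)\equiv 0\pmod 6$, i.e.\ iff $n\equiv 0,1\pmod 3$, not $\pmod 6$. In particular $v\equiv 9\pmod{24}$ gives $v\equiv 0\pmod 3$, so a $(K_v,P_4)$-design \emph{does} exist in that case. Your explanation that ``in the $v\equiv 9$ case a $(K_v,P_4)$-design does not exist and the down-link must instead land in $K_{v+1}$'' is therefore incorrect; the reason $9$ appears in both clauses is simply that for $v\equiv 9\pmod{24}$ \emph{both} $v$ and $v+1$ are admissible $P_4$-orders, and the lemma asserts both down-links. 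Relatedly, since every $v$ in the statement satisfies $v\equiv 1\pmod 8$, $v$ is odd and there is no ``half-difference'' $v/2$ in $\ZZ_v$; that worry is vacuous.

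\medskip

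\textbf{The repair via Theorem~\ref{every} fails.} After deleting one edge from each base cycle and developing, the leftover graph $R$ is the circulant on $\ZZ_v$ with the $t=(v-1)/8$ suppressed differences. It is vertex-transitive of degree $2t=(v-1)/4$, so \emph{no} vertex has degree $v-1$ and Theorem~\ref{every} is inapplicable. Adding a single vertex $\infty$ gives exactly one full-degree vertex, still not the two required; ``any original vertex now has degree $v$'' is false (its degree is $(v-1)/4+1$). So you cannot outsource the completion to Theorem~\ref{every} or Proposition~\ref{bip}; the residual circulant must be decomposed by hand.

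\medskip

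\textbf{How the paper actually closes the gap.} The paper writes down the explicit family $C^a=\bigl(0,\,a,\,\tfrac{v+1}{2},\,\tfrac{v-1}{8}+a\bigr)$ for $a=1,\dots,\tfrac{v-1}{8}$, always suppresses the edge $[0,a]$, so the leftover differences are precisely $1,2,\dots,\tfrac{v-1}{8}$. These are grouped in consecutive triples $\{a+1,a+2,a+3\}$, and for each triple the three developed $1$-factors are spliced into the paths $[\,i+1,\,a+i+2,\,i,\,a+i+3\,]$, $i\in\ZZ_v$. This disposes of all leftovers when $v\equiv 1\pmod{24}$. For $v\equiv 9\pmod{24}$ one difference (namely $1$) remains; its development is the Hamilton cycle $(0,1,\dots,v-1)$, which is $P_4$-decomposable since $v\equiv 0\pmod 3$, giving the $K_v$ case, and combining it with the star at $\infty$ yields a wheel that splits into copies of a fixed $6$-edge gadget, giving the $K_{v+1}$ case. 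For $v\equiv 17\pmod{24}$ two differences remain; together with the star at $\infty$ they are reassembled as $[\alpha,\,1+i,\,i,\,\tfrac{v-1}{8}+i]$, $i\in\ZZ_v$. All of this is explicit and bypasses Theorem~\ref{every} entirely.
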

\begin{proof}
Take $v=s+24t\geq 9$, with $s=1,9,17$,  and $V(K_v)=\ZZ_v$. Consider the set of $4$-cycles
\[{\cal C}=\left\{C^a=\left(0,a,\frac{v+1}{2},\frac{v-1}{8}+a\right)\ | \ a=1,2,\dots,\frac{v-1}{8}\right\}.\]
It is straightforward to check that
\[\Delta C^a= \pm\left\{a,\frac{v+1}{2}-a,\frac{3v+5}{8}-a,\frac{v-1}{8}+a\right\}. \]
Hence $\Delta {\cal C}=\ZZ_v\setminus\{0\}$, so, by \cite{BPBica}, the $C^a$ are the $\frac{v-1}{8}$
base blocks of a cyclic $(K_v,C_4)$-design.
The development of each base block gives $v$ different 4-cycles, from each of which we extract the edge obtained by developing
$[0,a]$. The obtained $P_4$'s will be used to define a down-link in a natural way.
 The removed edges can be connected to complete the $P_4$-decomposition of $K_v$ as follows: for each triple
 $\{[0,a+1],[0,a+2],[0,a+3]\}$, for $a\equiv 1\pmod 3$ where $a\in \{1,2,\dots,\frac{v-1}{8}\}$, consider the three
 developments   and  connect the edges $\{[i+1,a+1+(i+1)],[i,a+2+i],[i,a+3+i]\}$ obtaining the paths
 $(i+1,a+i+2,i,a+i+3)$, with $i\in \ZZ_v$.\\
   If $v\equiv 1\pmod{24}$, we have the required $P_4$-decomposition.\\
   If $v\equiv 9\pmod{24}$, we have the required $P_4$-decomposition except for the development of $[0,1]$.
 The $v$ edges of such a development can be easily  connected to give the $v$-cycle $C=(0,1,\ldots,v-1)$,
 which obviously admits a
 $P_4$-decomposition.  So, for $v\equiv 1,9\pmod{24}$, there exists a down-link from a $(K_v,C_4)$-design to a
 $(K_v,P_4)$-design.
 Under the assumption  $v\equiv 9\pmod {24}$,  $n=v+1$ is also admissible. In this case,
 add the vertex $\alpha$ to $V(K_v)$ to obtain a $K_{v+1}$ supporting the codomain of the down-link.
 Actually,  the star $S_{[\alpha;V]}$ of center $\alpha$ and external vertices
 the elements of $V(K_v)$
 has been added. Proceed as before till to the last but one step,
namely do not decompose the $v$-cycle $C$ obtained by developing $[0,1]$.
So it remains to determine a $P_4$-decomposition of the wheel $W=C\cup S_{[\alpha;V]}$.
 It is easy to see that $W$ can be decomposed into $3+8t$ copies of the graph $W'$ in Figure \ref{Ruota},
 which evidently admits a $P_4$-decomposition.

 \begin{figure}[h]
 \begin{center}
\includegraphics[width=.15\textwidth]{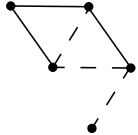}
\caption{The graph $W'$ as union of two $P_4$'s.}\label{Ruota}
\end{center}
\end{figure}
\noindent
 If $v\equiv 17\pmod {24}$, proceeding as before, we determine the required $P_4$-decomposition except for the two
 developments, say $d_1$ and $d_2$, of the edges $[0,1]$ and $[0,\frac{v-1}{8}]$. Keeping in mind that we must also  add
 a vertex, say $\alpha$,
 to the codomain, we have to arrange the edges of $d_1$, $d_2$ and $S_{[\alpha;V]}$.
 It easy to see that we can  obtain the   $P_4$'s as $[\alpha,1+i,i,\frac{v-1}{8}+i]$, for $i\in \ZZ_v$.\\
 So, for $v\equiv 9,17\pmod {24}$ there exists a down-link from a $(K_v,C_4)$-design to a $(K_{v+1},P_4)$-design.
\end{proof}

\begin{thm}\label{thm:cycle}
For any admissible $v>1$,
\begin{gather}
\cL_1C_{4}(v) =\{n\geq v\ |\ n\equiv0,1\ (\mod{3})\}; \label{L1C4} \\
\{n\geq v+2\ |\ n\equiv0,1\ (\mod{3})\}\subseteq \cL_2C_{4}(v)\subseteq\{n\geq v\ |\ n\equiv0,1\ (\mod{3})\}. \label{L2C4}
\end{gather}
\end{thm}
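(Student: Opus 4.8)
Recall that a $(K_v,C_4)$-design exists precisely when $v\equiv 1\pmod 8$, so an admissible $v>1$ satisfies $v\equiv 1,9,17\pmod{24}$; also, by \cite{T}, a $(K_n,P_4)$-design exists if and only if $n\equiv 0,1\pmod 3$. The plan is to establish four containments: the upper bounds $\cL_1C_{4}(v)\subseteq\{n\ge v\mid n\equiv 0,1\pmod 3\}$ and $\cL_2C_{4}(v)\subseteq\cL_1C_{4}(v)$, the lower bound $\{n\ge v+2\mid n\equiv 0,1\pmod 3\}\subseteq\cL_2C_{4}(v)$, and the lower bound $\{n\ge v\mid n\equiv 0,1\pmod 3\}\subseteq\cL_1C_{4}(v)$.

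For the upper bound on $\cL_1C_{4}(v)$, suppose $n\in\cL_1C_{4}(v)$, witnessed by a down-link $f$ from a $(K_v,C_4)$-design $\cB$ to a $(K_n,P_4)$-design $\cB'$. The mere existence of $\cB'$ forces $n\equiv 0,1\pmod 3$. For the inequality $n\ge v$ I would use the key observation that \emph{every} copy of $P_4$ contained in a $4$-cycle uses all four of the cycle's vertices; hence $V(f(B))=V(B)$ for each $B\in\cB$, and since $K_v$ has no isolated vertex the blocks of $\cB$ cover $V(K_v)$, so $V(K_v)=\bigcup_{B\in\cB}V(f(B))\subseteq V(K_n)$, i.e. $v\le n$. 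Since $v$ is admissible a $(K_v,C_4)$-design exists, whence $\cL_2C_{4}(v)\subseteq\cL_1C_{4}(v)$; combined with the previous bound this gives the upper bound of \eqref{L2C4}.

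The lower bound in \eqref{L2C4} is immediate from Corollary \ref{n>v+1} applied with $\Gamma=C_4$ (note $P_4\le C_4$): $\{n\ge v+2\mid n\equiv 0,1\pmod 3\}\subseteq\cL_2C_{4}(v)\subseteq\cL_1C_{4}(v)$. Hence, to prove the equality \eqref{L1C4}, it remains only to decide whether the admissible values among $n=v$ and $n=v+1$ lie in $\cL_1C_{4}(v)$, which I would settle according to the residue of $v$ modulo $24$. If $v\equiv 1\pmod{24}$, then $v\equiv 1\pmod 3$ is admissible and $v\in\cL_1C_{4}(v)$ by Lemma \ref{lem:cp4}, while $v+1\equiv 2\pmod 3$ is not admissible, so $\cL_1C_{4}(v)=\{v\}\cup\{n\ge v+2\mid n\equiv 0,1\pmod 3\}=\{n\ge v\mid n\equiv 0,1\pmod 3\}$. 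If $v\equiv 9\pmod{24}$, then both $v$ and $v+1$ are admissible and both lie in $\cL_1C_{4}(v)$ by the two assertions of Lemma \ref{lem:cp4}, so again $\cL_1C_{4}(v)$ contains every admissible $n\ge v$. If $v\equiv 17\pmod{24}$, then $v\equiv 2\pmod 3$ is \emph{not} admissible (so $v$ is absent from both sides), while $v+1\equiv 0\pmod 3$ is admissible and lies in $\cL_1C_{4}(v)$ by Lemma \ref{lem:cp4}; together with the values $\ge v+2$ this again exhausts $\{n\ge v\mid n\equiv 0,1\pmod 3\}$.

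The only genuinely new ingredient is the observation underlying the bound $n\ge v$: a $P_4$ inside a $C_4$ necessarily spans all four vertices (this is exactly what distinguishes $C_4$ from the longer cycles treated earlier). All remaining steps are assembled from Lemma \ref{lem:cp4} and Corollary \ref{n>v+1}, so I do not expect a real obstacle; the one point requiring care is the bookkeeping of which of $v$ and $v+1$ is admissible modulo $3$ in each of the three residue classes of $v$ modulo $24$, so that every "small'' value is either supplied by Lemma \ref{lem:cp4} or legitimately ruled out.
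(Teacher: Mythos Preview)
Your proof is correct and follows essentially the same route as the paper: the upper bound $n\ge v$ from the fact that any $P_4$ in a $C_4$ is spanning (the paper just says ``clearly, $n\ge v$'' without spelling this out), Corollary~\ref{n>v+1} for the lower inclusion in \eqref{L2C4}, and Lemma~\ref{lem:cp4} for the small values $n\in\{v,v+1\}$. The only cosmetic difference is that, to fill in the admissible values $n\ge v+2$ of $\cL_1C_4(v)$, the paper invokes Remark~\ref{min} (hence Corollary~\ref{c34}) rather than passing through $\cL_2C_4(v)$ via Corollary~\ref{n>v+1} as you do; the two devices are interchangeable here.
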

\begin{proof}
Let $\cB$ and $\cB'$ be, respectively, a $(K_v,C_4)$-design and a $(K_n,P_4)$-design.
Suppose that $\cB$ can be down-linked to $\cB'$.
Clearly, $n\geq v$.
Hence $\cL_2C_{4}(v)\subseteq \cL_1C_{4}(v) \subseteq  \{n\geq v\ |\ n\equiv0,1\ (\mod{3})\}$.\\
To prove the reverse inclusion in (\ref{L1C4}) observe that
a $(K_v,C_4)$-design exists if, and only if,
$v\equiv 1 \pmod 8$ and a $(K_n,P_4)$-design exists if, and only if, $n \equiv 0,1 \pmod 3$.
So it makes sense to look for a down-link from a $(K_v,C_4)$-design to a
$(K_v,P_4)$-design only for $v \equiv 1,9 \pmod {24}$. Likewise,
a down-link from a $(K_v,C_4)$-design to a
$(K_{v+1},P_4)$-design can exist only if $v \equiv 9,17 \pmod {24}$.
The existence of such down-links is proved in Lemma \ref{lem:cp4}.
The statement of (\ref{L1C4}) follows from Remark \ref{min}.
The other inclusion in (\ref{L2C4}) immediately follows from Corollary \ref{n>v+1}.
\end{proof}

\begin{thm}
For any admissible $v>1$,
\begin{gather}
\cL_1P_5(v)   =   \{n\geq v-1\ |\ n\equiv0,1(\mod 3)\}; \label{L1P5}\\
\{n\geq v+2\ |\ n\equiv0,1(\mod 3)\} \subseteq \cL_2P_5(v)   \subseteq  \{n\geq v\ |\ n\equiv0,1(\mod 3)\}.\label{L2P5}
\end{gather}
\end{thm}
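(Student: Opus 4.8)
The plan is to mirror, for $P_5$-designs, the strategy used for $C_4$-systems in Theorem~\ref{thm:cycle}. First I would establish the trivial upper bounds. If a $(K_v,P_5)$-design $\cB$ is down-linked to a $(K_n,P_4)$-design $\cB'$, then since $P_4\leq P_5$ each block of $\cB$ contains at least one vertex that is not used by its image $P_4$; however this does not force $n\geq v$ in general, so the genuine constraint is simply that the image graph lives inside $K_n$ together with the $P_4$-divisibility condition $n\equiv 0,1\pmod 3$. For $\cL_1$ the relevant bound is $n\geq v-1$: one can hope to ``absorb'' a single vertex because a $P_5$ on five vertices can donate a $P_4$ that omits a prescribed vertex, and across all blocks of a well-chosen $(K_v,P_5)$-design all omitted vertices can be arranged to coincide in a single vertex, leaving the down-link image inside $K_{v-1}$. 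For $\cL_2$ the upper bound $n\geq v$ is immediate since an arbitrary $(K_v,P_5)$-design need not have this coincidence property, while the lower bound $\{m\geq v+2\mid m\equiv 0,1\pmod 3\}\subseteq \cL_2P_5(v)$ is exactly Corollary~\ref{n>v+1} applied with $\Gamma=P_5$ (as $P_4\leq P_5$).

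For the reverse inclusion in \eqref{L1P5} the main work is to exhibit, for each admissible $v$, a single $(K_v,P_5)$-design that down-links to a $(K_{v-1},P_4)$-design. Recall a $(K_v,P_5)$-design exists iff $v\equiv 0,1\pmod 8$. I would use the difference-family machinery recalled in the excerpt: construct a $(v,P_5,1)$-difference family whose base paths $[0,a,\dots]$ are arranged so that from each developed $P_5$ one can extract a $P_4$ avoiding a fixed vertex (say the vertex $\tfrac{v-1}{8}$-type coordinate used as the ``hole''), then bundle the leftover edges in triples — just as in Lemma~\ref{lem:cp4}, where triples $\{[0,a+1],[0,a+2],[0,a+3]\}$ with $a\equiv 1\pmod 3$ were reassembled into $P_4$'s — and mop up the residual one or two developments (e.g.\ a full $v$-cycle, or a wheel after adding back a vertex) using the small gadgets already established. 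Then invoke Remark~\ref{min}: once $v-1\in\cL_1P_5(v)$, Corollary~\ref{c34} fills in all admissible $n\geq v-1$ except possibly $n=v$, and one checks $v\in\cL_1P_5(v)$ separately (either directly, or because $v\equiv 0,1\pmod 8$ forces a convenient residue mod $24$ allowing the ``add one vertex'' variant, exactly as in the $v+1$ cases of Lemma~\ref{lem:cp4}).

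The hard part will be the explicit difference-family construction and the case analysis modulo $24$ (or whatever modulus the $P_5$-divisibility forces) needed to guarantee that the leftover edges after extracting the $P_4$'s always reassemble into $P_4$'s, including the boundary developments; this is the analogue of the somewhat delicate bookkeeping in the proof of Lemma~\ref{lem:cp4}, and it is where the ``gluing of down-links'' technique from \cite{BGP} does the real work — one decomposes $K_v$ (and the codomain $K_{v-1}$ or $K_v$) into pieces on which partial down-links are already known and glues them. Everything else is routine: the necessity bounds are one-line observations, the $\cL_2$ lower bound is a direct citation of Corollary~\ref{n>v+1}, and the propagation from a single small value to the whole spectrum is Remark~\ref{min}.
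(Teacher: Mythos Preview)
Your overall architecture is right --- Corollary~\ref{n>v+1} for the left inclusion in \eqref{L2P5}, explicit constructions plus Remark~\ref{min} for \eqref{L1P5} --- but there is one genuine gap and one methodological divergence from the paper that you should be aware of.

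\medskip
\textbf{The gap: the upper bound in \eqref{L2P5}.} You write that ``the upper bound $n\geq v$ is immediate since an arbitrary $(K_v,P_5)$-design need not have this coincidence property.'' That is not a proof: to show $v-1\notin\cL_2P_5(v)$ you must \emph{exhibit} a $(K_v,P_5)$-design that cannot be down-linked to any $(K_{v-1},P_4)$-design, and this is not automatic. The only two $P_4$'s inside a $P_5=[a,b,c,d,e]$ are $[a,b,c,d]$ and $[b,c,d,e]$, so a vertex can be dropped by the down-link only if it is an \emph{endpoint} of every block containing it. The paper shows that at most one vertex of a $(K_v,P_5)$-design can have this property (the edge joining two such vertices would force both to be endpoints of the same path, impossible), and then performs an explicit local surgery --- replacing two carefully chosen blocks $P^1,P^2$ through that vertex by a reassembled pair --- to produce a design in which \emph{every} vertex has degree~$2$ in some block. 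That design witnesses $v-1\notin\cL_2P_5(v)$. This argument is short but not trivial, and your proposal omits it entirely.

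\medskip
\textbf{The divergence: how the constructions are actually done.} For the reverse inclusion in \eqref{L1P5} you propose building cyclic $(K_v,P_5)$-designs via $(v,P_5,1)$-difference families, in direct analogy with Lemma~\ref{lem:cp4}. The paper does \emph{not} do this; it uses the gluing method as the primary engine, not as a mop-up. Concretely, it writes $K_{\ell+24t}=K_\ell\cup K_{24t}\cup K_{\ell,24t}$, breaks $K_{24t}$ and $K_{\ell,24t}$ further into copies of $K_{24}$, $K_{3,4}$, $K_{s,3}$, $K_{s,4}$, and then provides a small table of ``basic component'' down-links (e.g.\ $(K_{25},P_5)\to(K_{24},P_4)$, $(K_{3,4},P_5)\to(K_{3,4},P_4)$, $(K_{9,24},P_5)\to(K_{10,24},P_4)$, etc.) which are glued together. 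This sidesteps the difficulty your approach would hit for $v\equiv 0\pmod 8$, where $v$ is even and a purely cyclic difference-family construction over $\ZZ_v$ is not straightforward. Your idea is not wrong in spirit, but the paper's route is more uniform and avoids the even/odd split.
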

\begin{proof}

The first inclusion in (\ref{L2P5}) follows from Corollary \ref{n>v+1}.
In order to prove the second, it is sufficient to show that for any admissible $v$
there exists a $(K_v,P_5)$-design $\cB$ wherein no vertices can be deleted. In particular,
this is the case if each vertex of $K_v$ has degree $2$ in at least one block of $\cB$.
First of all note that in a $(K_v,P_5)$-design  there is at most one vertex with degree $1$ in each block
where it appears. Suppose that there actually exists a $(K_v,P_5)$-design $\overline{\cB}$
with a vertex $x$ as above. It is easy to see that in $\overline{\cB}$ there is  at least one block
$P^1=[x,a,b,c,d]$ such that the vertices $a,b$ and $c$ have degree two in at least another block. Let
$P^2=[x,d,e,f,g]$. By reassembling the edges of $P^1 \cup P^2$, it is possible to replace in
$\overline{\cB}$ these two paths with $P^3=[d,x,a,b,c],\ P^4=[c,d,e,f,g]$ if $c\neq f,g$
or  $P^5=[a,x,d,c,g], P^6=[a,b,c,e,d]$ if $c=f$ or $P^7=[c,d,x,a,b], P^8=[b,c,f,e,d]$ if $c=g$.
Thus we have again a $(K_v,P_5)$-design. By the assumption on $a,b,c$ all the vertices of this new design
have degree two in at least one block.\\
Now we consider Relation (\ref{L1P5}). Let $\cB$ and $\cB'$ be respectively a $(K_v,P_5)$-design
and a $(K_n,P_4)$-design.
Suppose there exists a down-link $f: \cB \rightarrow \cB'$. Clearly, $n>v-2$.
Hence, $\cL_1P_5(v) \subseteq\{n\geq v-1\ |\ n\equiv0,1(\mod 3)\}$.\\
To show the reverse inclusion in (\ref{L1P5}) we prove the actual existence of designs providing
down-links.  Since a $(K_v,P_5)$-design exists if, and only if, $v\equiv0,1(\mod 8)$
and a $(K_n,P_4)$-design exists if, and only if, $n\equiv 0,1 (\mod 3)$, it makes sense
to look for a down-link from a $(K_v,P_5)$-design to a $(K_{v-1},P_4)$-design
only if $v\equiv 1,8,16,17(\mod\ 24)$. For the same reason, it makes sense to construct
a down-link from a $(K_v,P_5)$-design to a $(K_{v},P_4)$-design only for $v\equiv 0,1,9,16(\mod 24)$.
In view of Remark \ref{min}, in order to complete the proof, we have also to provide a down-link
from a $(K_v,P_5)$-design to a $(K_{v+1},P_4)$-design for every $v\equiv 0,9(\mod 24)$.\\
To determine the necessary down-links, we analyze
a few basic cases and then apply the \emph{gluing method}. To this end, we will use
the following obvious relations in an appropriate way:
$K_{a+b}=K_a\bigcup K_b\bigcup K_{a,b}$ and $K_{a+b,c}=K_{a,c}\cup K_{b,c}$. In particular,
\begin{eqnarray}
\nonumber K_{\ell+24t}&=&K_\ell\cup K_{24t}\cup K_{\ell,24t};\\
\nonumber K_{24t}&=&tK_{24}\cup\binom{t}{2}K_{24,24}=tK_{24}\cup 48\binom{t}{2}K_{3,4};\\
\nonumber K_{\ell=rs,24t} &=& rK_{s,24t}=rtK_{s,24}= 6rtK_{s,4}= 8rtK_{s,3}.
\end{eqnarray}
Let us now examine the possible cases.

\

 \noindent$\bullet~(K_v,P_5)$ $\rightarrow (K_{v-1},P_4)$-design with $v=\ell+24t>1$, $\ell=1,8,16,17$.\\
{\small\begin{tabular}{lllll}
&&&&\cr
   $P_5$-design           &   basic components &$\rightarrow$& basic components &$P_4$-design \cr
   of order & & & & of order \cr
$1+24t$ & $(K_{25},P_5)$, $(K_{3,4},P_5)$&&$(K_{24},P_4)$, $(K_{3,4},P_4)$ & $24t$ \cr
$8+24t$ & $(K_{8},P_5)$, $(K_{24},P_5)$ &&($K_{7},P_4)$, $(K_{24},P_4)$ &$7+24t$  \cr
&$(K_{4,3},P_5)$&& $(K_{4,3},P_4)$, $(K_{3,3},P_4)$\cr
$16+24t$ & ($K_{16},P_5)$, $(K_{24},P_5)$ &&($K_{15},P_4)$, $(K_{24},P_4)$ &$15+24t$  \cr
&$(K_{4,3},P_5)$&& $(K_{4,3},P_4)$, $(K_{3,3},P_4)$\cr
$17+24t$ & ($K_{17},P_5)$, $(K_{24},P_5)$ &&($K_{16},P_4)$, $(K_{24},P_4)$ &$16+24t$  \cr
&$(K_{4,3},P_5)$&& $(K_{4,3},P_4)$, $(K_{3,3},P_4)$\cr
\end{tabular}}

\

\noindent
$\bullet~(K_v,P_5)$ $\rightarrow (K_v,P_4)$-design with $v=\ell+24t>1$, $\ell=0,1,9,16$.\\
{\small\begin{tabular}{lllll}
&&&&\cr
   $P_5$-design             &   basic components &$\rightarrow$& basic components &$P_4$-design\cr
of order &&&& of order \cr
$24t$ & $(K_{24},P_5)$, $(K_{3,4},P_5)$&&$(K_{24},P_4)$, $(K_{3,4},P_4)$ & $24t$ \cr
$1+24t$ & $(K_{9},P_5)$, $(K_{16},P_5)$ &&$(K_{9},P_4)$, $(K_{16},P_4)$ &$1+24t$  \cr
&$(K_{24},P_5)$,$(K_{3,4},P_5)$&& $(K_{24},P_4)$,$(K_{3,4},P_4)$ \cr
$9+24t$ & $(K_{9},P_5)$, $(K_{24},P_5)$ &&($K_{9},P_4)$, $(K_{24},P_4)$ &$9+24t$  \cr
&$(K_{3,4},P_5)$&& $(K_{3,4},P_4)$\cr
$16+24t$ & $(K_{16},P_5)$, $(K_{24},P_5)$ &&$(K_{16},P_4)$, $(K_{24},P_4)$ &$16+24t$  \cr
&$(K_{3,4},P_5)$&& $(K_{3,4},P_4)$\cr
\end{tabular}}

\

\noindent
$\bullet~(K_v,P_5)$ $\rightarrow (K_{v+1},P_4)$-design with $v=\ell+24t>1$, $\ell=0,9$.\\
{\small\begin{tabular}{lllll}
&&&&\cr

$P_5$-design             &   basic components &$\rightarrow$& basic components &$P_4$-design\cr
of order &&&& of order \cr
$24t$ & $(K_{24},P_5)$, $(K_{3,4},P_5)$&&$(K_{25},P_4)$, $(K_{3,4},P_4)$ & $1+24t$ \cr
$9+24t$ & $(K_{9},P_5)$, $(K_{24},P_5)$ &&($K_{10},P_4)$, $(K_{24},P_4)$ &$10+24t$  \cr
&$(K_{3,4},P_5)$,$(K_{9,24},P_5)$&& $(K_{3,4},P_4)$,$(K_{10,24},P_4)$\cr
\end{tabular}}

\

\noindent
It is straightforward to show the existence of such basic down-links.
For instance we provide a down-link $\xi$ from  a $(K_{9,24},P_5)$-design to a $(K_{10,24},P_4)$-design.
Let $A=\{a,b,c,d,e,f,g,h,i\}$ and $B=\ZZ_{24}$, that is $K_{9,24}=K_{A,B}$.
The following are the $54$ paths of a $P_5$-decomposition of $K_{A,B}$:

\noindent
 {\scriptsize\begin{tabular}{lllll}
   $[\underline{6,a},12,b,1]$&$[\underline{1,c},12,d,6]$&$[\underline{6,e},18,f,1]$&$[\underline{1,g},12,h,0]$&$[\underline{12,i},0,a,18]$\cr
   $[\underline{7,a},13,b,2]$&$[\underline{2,c},13,d,7]$&$[\underline{7,e},19,f,2]$&$[\underline{2,g},13,h,1]$&$[\underline{13,i},1,a,19]$\cr
   $[\underline{8,a},14,b,3]$&$[\underline{3,c},14,d,8]$&$[\underline{8,e},20,f,3]$&$[\underline{3,g},14,h,2]$&$[\underline{14,i},2,a,20]$\cr
   $[\underline{9,a},15,b,4]$&$[\underline{4,c},15,d,9]$&$[\underline{9,e},21,f,4]$&$[\underline{4,g},15,h,3]$&$[\underline{15,i},3,a,21]$\cr
   $[\underline{10,a},16,b,5]$&$[\underline{5,c},16,d,10]$&$[\underline{10,e},22,f,5]$&$[\underline{5,g},16,h,4]$&$[\underline{16},i,4,a,22]$\cr
   $[\underline{11,a},17,b,0]$&$[\underline{0,c},17,d,11]$&$[\underline{11,e},23,f,0]$&$[\underline{0,g},17,h,5]$&$[\underline{17,i},5,a,23]$\cr
   \cr
   $[\underline{18,b},6,c,19]$&$[\underline{19,d},0,e,12]$&$[\underline{12,f},6,g,19]$&$[\underline{19,h},6,i,18]$&\cr
   $[\underline{19,b},7,c,20]$&$[\underline{20,d},1,e,13]$&$[\underline{13,f},7,g,20]$&$[\underline{20,h},7,i,19]$&\cr
   $[\underline{20,b},8,c,21]$&$[\underline{21,d},2,e,14]$&$[\underline{14,f},8,g,21]$&$[\underline{21,h},8,i,20]$&\cr
   $[\underline{21,b},9,c,22]$&$[\underline{22,d},3,e,15]$&$[\underline{15,f},9,g,22]$&$[\underline{22,h},9,i,21]$&\cr
   $[\underline{22,b},10,c,23]$&$[\underline{23,d},4,e,16]$&$[\underline{16,f},10,g,23]$&$[\underline{23,h},10,i,22]$&\cr
   $[\underline{23,b},11,c,18]$&$[\underline{18,d},5,e,17]$&$[\underline{17,f},11,g,18]$&$[\underline{18,h},11,i,23]$.&\cr
  \end{tabular}}

  \

 \noindent
 We obtain the  image of any $P_5$ via $\xi$ by removing the underlined edge.
  Now, to complete  the codomain, we have to add a further vertex to $A$,
 say $\alpha$,  together with  all the edges connecting $\alpha$ to the vertices of $B$.
  Thus, it remains to decompose the graph formed  by the removed edges
 together with the star of center $\alpha$ and external vertices in $B$.
 Such a $P_4$-decomposition is listed below:

 \

%
\noindent
 {\scriptsize\begin{tabular}{llllll}
  $[6,a,9,\alpha]$ & $[7,a,10,\alpha]$ & $[8,a,11,\alpha]$ & $[1,c,4,\alpha]$ & $[2,c,5,\alpha]$ &\cr
  $[3,c,0,\alpha]$ & $[9,e,6,\alpha]$ & $[10,e,7,\alpha]$ & $[11,e,8,\alpha]$ & $[4,g,1,\alpha]$ &\cr
  $[5,g,2,\alpha]$ & $[0,g,3,\alpha]$ & $[15,i,12,\alpha]$&$[16,i,13,\alpha]$ & $[17,i,14,\alpha]$ &\cr
  $[22,d,19,\alpha]$ &  $[23,d,20,\alpha]$ & $[21,d,18,\alpha]$ & $[12,f,15,\alpha]$ & $[13,f,16,\alpha]$ &\cr
  $[14,f,17,\alpha]$ &  $[20,h,21,\alpha]$&$[23,h,22,\alpha]$&$[20,b,23,\alpha]$ & $[h,18,b,21]$ & \hspace{-0.2cm}
  $[h,19,b,22].$
   \end{tabular}}

\end{proof}

\end{document}